\documentclass[11pt]{article}
\usepackage{graphicx,psfrag}
\usepackage{amssymb,amsfonts,amsthm}
\usepackage{amsmath,amsthm,amssymb}
\usepackage{amsfonts}
\usepackage{relsize}
\usepackage[T1]{fontenc}
\usepackage[utf8]{inputenc}
\usepackage{graphicx}
\usepackage{color}
\usepackage{subcaption}

\newcommand{\Cl}{\mathrm{Cl}}

\newcommand{\DG}{\mathrm{DG}}

\newcommand{\dd}{\mathcal{D}}

\newcommand{\la}{\langle}
\newcommand{\ra}{\rangle}

\newcommand{\F}{\mathcal{F}}

\newtheorem{theorem}{Theorem}[section]

\newtheorem{cor}[theorem]{Corollary}

\theoremstyle{definition}
\newtheorem{definition}[theorem]{Definition}

\newtheorem{prob}[theorem]{Problem}

\newcommand{\kk}{\mathcal{K}}
\newcommand{\PP}{\mathcal {P}}

\newcommand{\be}{\begin{equation}}
\newcommand{\ee}{\end{equation}}

\newcommand {\N}{\mathbb{N}} 
\newcommand {\Z}{\mathbb{Z}}            
\newcommand {\R}{\mathbb{R}} 


\newcommand {\iv}{^{-1}}



\numberwithin{equation}{section}

\newcounter{AbcT}

\newcommand{\La}{\mathcal{L}}

\newcommand{\nc}{\newcommand}
\nc{\meet}{\wedge}
\nc{\op}{\operatorname}\nc{\FP}{\op{FP}}\nc{\FS}{\op{FS}}\nc{\FPhat}{\widehat{\op{FP}}}

\newtheorem {Theorem}    {Theorem}[section]

\newtheorem {Problem}    [Theorem]{Problem}

\newtheorem {Lemma}      [Theorem]    {Lemma}
\newtheorem {Corollary}   [Theorem] {Corollary}
\newtheorem {Proposition}[Theorem]    {Proposition}

\theoremstyle{remark}
\newtheorem {Remark}		 [Theorem]    {\bf{Remark}}

\newtheorem {Definition} [Theorem]    {\bf{Definition}}

\newcommand{\topp}{{\bf top}}

\newcommand{\bott}{{\bf bot}}
\newcommand{\1}{\mathbf{1}}
\newcommand{\arr}{\overrightarrow}

\oddsidemargin=0.2in
\evensidemargin=0.2in
\textwidth=6.3 in

\begin{document}

\title{On subgroups of R. Thompson's group $F$}
\author{Gili Golan, Mark Sapir\thanks{The research was partially supported by the NSF grant DMS-1500180. The paper was written while the second author was visiting the Max Planck Institute for Mathematics in Bonn.}}%
\maketitle

\abstract{We provide two ways to show that the R. Thompson group $F$ has maximal subgroups of infinite index which do not fix any number in the unit interval under the natural action of $F$ on $(0,1)$, thus solving a problem by D. Savchuk. The first way employs Jones' subgroup of the R. Thompson group $F$ and leads to an explicit finitely generated example. The second way employs directed 2-complexes and 2-dimensional analogs of Stallings' core graphs, and gives many implicit examples.  We also show that $F$ has a decreasing sequence of finitely generated subgroups $F>H_1>H_2>...$ such that $\cap H_i=\{1\}$ and for every $i$ there exist only finitely many subgroups of $F$ containing $H_i$. }

\section{Introduction}

\subsection{The stories about $F$}

Recall that the R. Thompson group $F$ consists of all piecewise linear increasing homeomorphisms of the unit interval $[0,1]$ with a finite number of linear segments with slopes of the form $2^n, n\in \Z$, and endpoints of the form $\frac{a}{2^n}, a, n\in \N\cup \{0\}$. The group $F$ has a presentation with two generators and two defining relations \cite{CFP, Sbook} (see below).

The group $F$ is one of the most mysterious objects in group theory. For example,  almost every 6 months a new ``proof'' of amenability or non-amenability of $F$ appears. The reason why all these proofs are wrong is that $F$ is very counter-intuitive. Statements which are ``obviously true'' turn out to be wrong.

This can also be illustrated by the story of finding  the Dehn function of $F$. First it was declared exponential because a sequence of loops in the Cayley graph of $F$ was found with exponential fillings, and it was ``obviously impossible'' to find fillings of these loops with smaller area. Then  it was proved to be subexponential and the most probable conjecture (for the same reason as before) was that it is something like $2^{\log^2 n}$. Then a polynomial of degree 5 upper bound was found and it was conjectured that this bound is optimal. Then a few lower bounds were found until Guba proved that $F$ has quadratic Dehn function, the smallest possible Dehn function of a non-hyperbolic finitely presented group (see the references in \cite[Chapter 5]{Sbook}).

\subsection{Subgroups of $F$}

Subgroups of $F$ have been extensively studied (see, for instance, \cite{Burillo,CFP, GuSa99,Brin,Bleak1,Cleary,Sav1,Sav,Bleak,GS,WC}). For example, it is known that the derived subgroup $[F,F]$ of $F$ is simple, and every finite index subgroup of $F$ contains $[F,F]$ \cite{CFP}. Finite index subgroups of $F$ are described in \cite{Bleak} and all solvable subgroups of $F$ are described in \cite{Bleak1}. It is also known \cite{GuSa99} that the wreath products $F\wr Z$ embeds into $F$ (even without distortion \cite{WC}). One of the most interesting and counter-intuitive results about subgroups of $F$ is that in a certain natural probabilistic model on the set of all finitely generated subgroups of $F$, every finitely generated nontrivial subgroup appears with a positive probability \cite{Cleary}. The survey in this paragraph is far from complete. We just wanted to give a rough description of the area.

In \cite{Sav1, Sav} Dmytro Savchuk studied subgroups $H_U$ of the group $F$ which are the stabilizers of finite sets of real numbers $U\subset(0,1)$. He proved that if $U$ consists of one number, then $H_U$ is a maximal subgroup of $F$. He also showed that the Schreier graphs of the subgroups $H_U$ are amenable.  He asked \cite[Problem 1.5]{Sav} whether every maximal subgroup of infinite index in $F$ is of the form $H_{\{\alpha\}}$, that is, fixes a number from $(0,1)$.\footnote{In the formulation of \cite[Problem 1.5]{Sav} the words ``of infinite index'' are missing. D. Savchuk informed us that this is a misprint. Obviously none of the finite index maximal subgroups of $F$ fixes a point in $(0,1)$.}

In this paper, we are going to answer that question by establishing a maximal subgroup of infinite index in $F$ that does not fix any real number in $(0,1)$ (see Section \ref{s:sav}). In fact we give two solutions of this problem. One, explicit solution, gives an example as the preimage of the subgroup $\arr F$, first described by Vaughan Jones \cite{Jones} and then studied by us in \cite{GS}, under a certain injective endomorphism of $F$ (Theorem \ref{t:11}). As a by-product of the proof, we give a positive answer to a question asked by Saul Schleimer: we prove that $\arr F$ is maximal in a certain subgroup of index $2$ of $F$ and, moreover,  there are exactly three subgroups of $F$ containing $\arr F$ (that fact, Corollary \ref{cor}, is also very counter-intuitive).

Another, implicit, solution gives an example (in fact many examples) of maximal subgroups containing certain proper finitely generated subgroups $H$ of $F$ which do not fix any point in $(0,1)$ and satisfy $H[F,F]=F$. The most difficult part of that solution is to prove that $H$ is indeed a proper subgroup of $F$.
  Note that the solvability of the membership problem for subgroups in $F$ is a very interesting open problem (it is mentioned in \cite{GuSa99}) and the generation problem is an important open case of the membership problem (we need to check if the generators of $F$ are in the subgroup). To prove that $H$ is proper we employ diagram groups of directed 2-complexes \cite{GSdc} and a 2-dimensional analog of Stallings foldings.

We also establish a nice property of Savchuk's subgroups $H_U$.

Let us call a subgroup $H$ of a group $G$ of {\em quasi-finite index} if the interval $[H,G]$ in the lattice of subgroups  of $G$ is finite, that is there are only finitely many subgroups of $G$ that are bigger than $H$. For example, every maximal subgroup and every subgroup of finite index are of quasi-finite index. We say that a finitely generated group $G$ is {\em quasi-residually finite} if it contains a decreasing sequence of finitely generated subgroups: $G> G_1 > G_2 >...$ such that each $G_i$ is of quasi-finite index in $G$ and $\cap G_i=\{1\}$. Clearly every residually finite group is  quasi-residually finite. Since the derived subgroup of the R. Thompson group $F$ is  infinite and simple, $F$  is not residually finite. Nevertheless, we prove that if $U$ is a finite set of finite binary fractions, then $H_U$ is of quasi-finite index in $F$ (this is a natural generalization of a result from \cite{Sav1}), and hence $F$ is quasi-residually finite (Theorem \ref{t:2}).

\section{Preliminaries on $F$}\label{sec:pre}

\subsection{Generators and normal forms}\label{ss:nf}

The group $F$ is generated by two functions $x_0$ and $x_1$ defined as follows \cite{CFP}

	\[
   x_0(t) =
  \begin{cases}
   2t &  \hbox{ if }  0\le t\le \frac{1}{4} \\
   t+\frac14       & \hbox{ if } \frac14\le t\le \frac12 \\
   \frac{t}{2}+\frac12       & \hbox{ if } \frac12\le t\le 1
  \end{cases} 	\qquad	
   x_1(t) =
  \begin{cases}
   t &  \hbox{ if } 0\le t\le \frac12 \\
   2t-\frac12       & \hbox{ if } \frac12\le t\le \frac{5}{8} \\
   t+\frac18       & \hbox{ if } \frac{5}{8}\le t\le \frac34 \\
   \frac{t}{2}+\frac12       & \hbox{ if } \frac34\le t\le 1 	
  \end{cases}
\]

The composition in $F$ is from left to right.

Every element of $F$ is determined by its action on the set of finite binary fractions which can be represented as $.\alpha$ where $\alpha$ is an infinite word in $\{0,1\}$ where all but finitely many letters are $0$. For each element $g\in F$ there exists a finite collection of pairs of words $(u_i,v_i)$ in the alphabet $\{0,1\}$ such that every infinite word in $\{0,1\}$ starts with exactly one of the $u_i$'s. The action of $F$ on a number $.\alpha$ is the following: if $\alpha$ starts with $u_i$, we replace $u_i$ by $v_i$. For example, $x_0$ and $x_1$  are the following functions:

\[
   x_0(t) =
  \begin{cases}
   .0\alpha &  \hbox{ if }  t=.00\alpha \\
    .10\alpha       & \hbox{ if } t=.01\alpha\\
   .11\alpha       & \hbox{ if } t=.1\alpha\
  \end{cases} 	\qquad	
   x_1(t) =
  \begin{cases}
   .0\alpha &  \hbox{ if } t=.0\alpha\\
   .10\alpha  &   \hbox{ if } t=.100\alpha\\
   .110\alpha            &  \hbox{ if } t=.101\alpha\\
   .111\alpha                      & \hbox{ if } t=.11\alpha\
  \end{cases}
\]

For the generators $x_0,x_1$ defined above, the group $F$ has the following finite presentation \cite{CFP}
$$F=\la x_0,x_1\mid [x_0x_1^{-1},x_1^{x_0}]=1,[x_0x_1^{-1},x_1^{x_0^2}]=1\ra.$$

Often, it is more convenient to consider an infinite presentation of $F$. For $i\ge 1$, let $x_{i+1}=x_1^{x_0^i}$ where $a^b$ denotes $b\iv ab$. In these generators, the group $F$ has the following presentation \cite{CFP}
$$\la x_i, i\ge 0\mid x_i^{x_j}=x_{i+1} \hbox{ for every}\ j<i\ra.$$

There is a natural notion of normal forms of elements of $F$ related to this infinite presentation (see \cite{CFP}).
A word $w$ in the alphabet $\{x_0,x_1,x_2,\dots\}^{\pm 1}$ is said to be a \emph{normal form} if
\begin{equation}\label{NormForm}
w= x_{i_1}^{s_1}\ldots x_{i_m}^{s_m}x_{j_n}^{-t_n}\ldots x_{j_1}^{-t_1},
\end{equation}
where
\begin{enumerate}
\item[(1)] $i_1\le\cdots\le i_m\ne j_n\ge\cdots\ge j_1$;
$s_1,\ldots,s_m,t_1,\ldots t_n>0$, and
\item[(2)] if $x_i$ and $x_i\iv$ occur in
(\ref{NormForm}) for some $i\ge0$ then either $x_{i+1}$ or $x_{i+1}\iv$
also occurs in~(\ref{NormForm}).
\end{enumerate}
Every element of $F$ is uniquely represented by a normal form $w$. Moreover, the normal form of an element $w\in F$ is the shortest possible representation of the element as a word in $\{x_0,x_1,x_2,\dots\}^{\pm 1}$.
A word $w$ of the form (\ref{NormForm}) which satisfies condition (1) but not necessarily condition (2) above, is called a \emph{semi-normal form}.

Given a word $w$ in the alphabet $\{x_0,x_1,x_2,\dots\}^{\pm 1}$, there is a simple algorithm for finding the corresponding normal form of $w$ (see \cite{SU}).
First, one uses the following rewriting system $\Sigma$. 
\begin{enumerate}
\item[($\sigma_1$)] $x_ix_j\rightarrow x_jx_{i+1}$ for $i>j$.
\item[($\sigma_2$)] $x_i^{-1}x_j\rightarrow x_jx_{i+1}^{-1}$ for $i>j$.
\item[($\sigma_3$)] $x_i^{-1}x_j\rightarrow x_{j+1}x_i^{-1}$ for $j>i$.
\item[($\sigma_4$)] $x_i^{-1}x_j^{-1}\rightarrow x_{j+1}^{-1}x_i^{-1}$ for $j>i$.
\item[($\sigma_5$)] $x_i^{-1}x_i\rightarrow 1$.
\item[($\sigma_6$)] $x_ix_i^{-1}\rightarrow 1$.
\end{enumerate}
The rewriting system is terminating and confluent (for the terminology see \cite{Sbook}), hence every word $w$ can be transformed by applying a sequence of rewriting rules to a unique word $w'$ for which no rewriting rule can be applied. That word $w'$ does not contain subwords that are equal to the left hand sides of the rewriting rules, hence it is a semi-normal form (it is easy to verify). It is obvious that the word $w'$ is equal to $w$ in $F$ (see \cite[Lemma 1]{SU}).

The method of getting a normal form from a semi-normal form will not be important to us. Suffice it to know, that if $w'$ is a semi-normal form which does not satisfy condition (2) above, then the length of the corresponding normal form will be at most $|w'|-2$ (see  \cite[Lemma 5]{SU}).
Below, we will not distinguish between an element of $F$ and its normal form. 

Every normal form $w$ is a product $pq\iv$ where $p, q$ are positive words. The word $p$ is called the \emph{positive part} of $w$ and $q\iv$ is the \emph{negative part} of $w$. Every element whose normal form is a positive word (i.e., its negative part is empty), is called a \emph{positive element}. The set of positive elements of $F$ forms a submonoid of $F$ because the rewriting rules do not increase the number of negative letters in a word.

\subsection{Jones' subgroup}

Vaughan Jones \cite{Jones} showed that, similar to braids, elements of $F$ can be used to construct all links. He also showed  that a certain subgroup $\arr F$ of $F$ can be used to construct all oriented links. In \cite{GS} we answered several questions of Jones about $\arr F$. In particular we proved that $\arr F$ is isomorphic to the group $F_3$ introduced by K. Brown \cite{B}: it is a version of $F$ where all slopes of the piecewise-linear functions are powers of $3$ and break points of the derivative are finite 3-adic fractions. We also showed that $\arr F$ is generated in $F$ by elements $x_0x_1, x_1x_2, x_2x_3$.
By conjugating $x_1x_2$ and $x_2x_3$ by $(x_0x_1)^n$ one gets that for all $n\ge 0$, $x_nx_{n+1}\in\arr F$.

\subsection{Directed complexes and  diagram groups} \label{ss:dc}

The material from this subsection will be used only in Sections \ref{ss:imp} and \ref{53}. Here we basically follow \cite{GSdc}.

\begin{Definition}
\label{dirc}
{\rm
For every directed graph $\Gamma$ let $\PP$ be the set of all (directed)
paths in $\Gamma$, including the empty paths. A {\em directed $2$-complex\/}
is a directed graph $\Gamma$ equipped with a set $\F$ (called the {\em set
of $2$-cells\/}), and three maps $\topp{\cdot}\colon\F\to\PP$,
$\bott{\cdot}\colon\F\to\PP$, and $^{-1}\colon\F\to\F$ called {\em top\/},
{\em bottom\/}, and {\em inverse\/} such that
\begin{itemize}
\item for every $f\in\F$, the paths $\topp(f)$ and $\bott(f)$ are non-empty
and have common initial vertices and common terminal vertices,
\item $^{-1}$ is an involution without fixed points, and
$\topp(f^{-1})=\bott(f)$, $\bott(f^{-1})=\topp(f)$ for every $f\in\F$.
\end{itemize}
}
\end{Definition}

We shall often need an orientation on $\F$, that is, a subset
$\F^+\subseteq\F$ of {\em positive\/} $2$-cells, such that $\F$ is the
disjoint union of $\F^+$ and the set $\F^-=(\F^+)^{-1}$ (the latter is called
the set of {\em negative\/} $2$-cells).


If $\kk$ is a directed $2$-complex, then paths on $\kk$ are called
{\em $1$-paths\/} (we are going to have $2$-paths later). The initial and
terminal vertex of a $1$-path $p$ are denoted by $\iota(p)$ and
$\tau(p)$, respectively. For every $2$-cell $f\in\F$, the vertices
$\iota(\topp(f))=\iota(\bott(f))$ and $\tau(\topp(f))=\tau(\bott(f))$ are
denoted $\iota(f)$ and $\tau(f)$, respectively.

We shall denote each cell $F$ by $\topp(f)\to \bott(f)$. And we can denote a directed 2-complex $\kk$ similar to a semigroup presentaton
$\la E\mid \topp(f)\to \bott(f), f\in F\ra$ where $E$ is the set of all edges of  $\kk$ (note that we ignore the vertices of $\kk$).

For example, the complex $\la x\mid x\to x^2\ra$ is the {\em Dunce hat\/}
obtained by identifying all edges in the triangle (Figure \ref{f:1})
according to their directions.  It has one vertex, one edge, and one
positive $2$-cell.

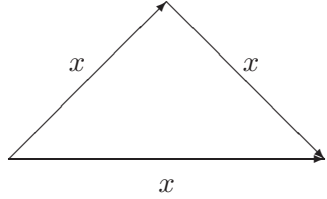
\begin{figure}[ht]
\begin{center}
\unitlength=0.7mm
\special{em:linewidth 0.4pt}
\linethickness{0.4pt}
\begin{picture}(61.00,36.00)
\put(1.00,6.00){\vector(1,1){30.00}}
\put(31.00,36.00){\vector(1,-1){30.00}}
\put(1.00,6.00){\vector(1,0){60.00}}
\put(31.00,1.00){\makebox(0,0)[cc]{$x$}}
\put(14.00,24.00){\makebox(0,0)[cc]{$x$}}
\put(47.00,24.00){\makebox(0,0)[cc]{$x$}}
\end{picture}

\caption{Dunce hat}
\label{f:1}
\end{center}
\end{figure}

With the directed 2-complex $\kk$, one can associate a category $\Pi(\kk)$
whose objects are directed $1$-paths, and morphisms are {\em $2$-paths\/}, i.\,e. sequences of replacements of $\topp(f)$ by
$\bott(f)$ in $1$-paths, $f\in\F$. More precisely, an {\em atomic $2$-path\/}
(an {\em elementary homotopy\/}) is a triple $(p,f,q)$, where $p$, $q$ are
$1$-paths in $\kk$, and $f\in\F$ such that $\tau(p)=\iota(f)$,
$\tau(f)=\iota(q)$. If $\delta$ is the atomic $2$-path $(p,f,q)$, then
$p\topp(f)q$ is denoted by $\topp(\delta)$, and $p\bott(f)q$ is denoted by
$\bott(\delta)$; these are called the {\em top\/} and the {\em bottom\/}
$1$-paths of the atomic $2$-path. Every {\em nontrivial\/} $2$-path $\delta$
on $\kk$ is a sequence of atomic paths $\delta_1$, \dots, $\delta_n$, where
$\bott(\delta_i)=\topp(\delta_{i+1})$ for every $1\le i<n$. In this case $n$
is called the {\em length\/} of the $2$-path $\delta$. The {\em top\/} and
the {\em bottom\/} $1$-paths of $\delta$, denoted by $\topp(\delta)$ and
$\bott(\delta)$, are $\topp(\delta_1)$ and $\bott(\delta_n)$, respectively.
 Every $1$-path $p$ is considered as a trivial
$2$-path with $\topp(p)=\bott(p)=p$. These are the identity morphisms in the
category $\Pi(\kk)$. The composition of $2$-paths $\delta$ and $\delta'$ is
called {\em concatenation\/} and is denoted $\delta\circ\delta'$.

With every atomic $2$-path $\delta=(p,f,q)$, where $\topp(f)=u$, $\bott(f)=v$
we associate the labeled plane graph $\Delta$ in Figure \ref{f:2}. An arc
labeled by a word $w$ is subdivided into $|w|$ edges. All edges are
oriented from left to right. The label of each oriented edge of the
graph is a symbol from the alphabet $E$, the set of edges in $\kk$. As a
plane graph, it has only one bounded face; we label it by the corresponding
cell $f$ of $\kk$. This plane graph $\Delta$ is called the diagram of $\delta$.
Such diagrams are called {\em atomic\/}. The leftmost and rightmost vertices
of $\Delta$ are denoted by $\iota(\Delta)$ and $\tau(\Delta)$, respectively.
The diagram $\Delta$ has two distinguished paths from $\iota(\Delta)$ to
$\tau(\Delta)$ that correspond to the top and bottom paths of $\delta$. Their
labels are $puq$ and $pvq$, respectively. These are called the top and the
bottom paths of $\Delta$ denoted by $\topp(\Delta)$ and $\bott(\Delta)$.

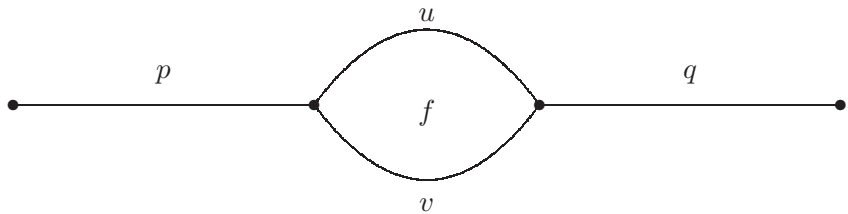
\begin{figure}[ht]
\begin{center}
\unitlength=1.00mm
\special{em:linewidth 0.4pt}
\linethickness{0.4pt}
\begin{picture}(110.66,26.33)
\put(0.00,13.33){\circle*{1.33}}
\put(40.00,13.33){\circle*{1.33}}
\put(70.00,13.33){\circle*{1.33}}
\put(110.00,13.33){\circle*{1.33}}
\bezier{200}(40.00,13.33)(55.00,33.33)(70.00,13.33)
\bezier{200}(40.00,13.33)(55.00,-6.67)(70.00,13.33)
\put(20.00,17.33){\makebox(0,0)[cc]{$p$}}
\put(55.00,25.33){\makebox(0,0)[cc]{$u$}}
\put(90.00,17.33){\makebox(0,0)[cc]{$q$}}
\put(55.00,12.33){\makebox(0,0)[cc]{$f$}}
\put(55.00,0.00){\makebox(0,0)[cc]{$v$}}
\put(0.33,13.33){\line(1,0){39.67}}
\put(70.00,13.33){\line(1,0){40.00}}
\end{picture}
\vspace{1ex}

\nopagebreak[4] 
\end{center}
\caption{An atomic diagram}
\label{f:2}
\end{figure}

The diagram corresponding to the trivial $2$-path $p$ is just an arc labeled
by $p$; it is called a {\em trivial diagram\/} and it is denoted by $\varepsilon(p)$.

Let $\delta=\delta_1\circ\delta_2\circ\cdots\circ\delta_n$ be a $2$-path
in $\kk$, where $\delta_1$, \dots, $\delta_n$ are atomic $2$-paths. Let
$\Delta_i$ be the atomic diagram corresponding to $\delta_i$. Then the
bottom path of $\Delta_i$ has the same label as the top path of $\Delta_{i+1}$
($1\le i<n$). Hence we can identify the bottom path of $\Delta_i$ with the
top path of $\Delta_{i+1}$ for all $1\le i<n$, and obtain a plane graph
$\Delta$, which is called the {\em diagram of the $2$-path\/} $\delta$.

Two diagrams are considered {\em equal\/} if they are isotopic as plane
graphs. The isotopy must take vertices to vertices, edges to edges, it must
also preserve labels of edges and   cells. Two $2$-paths are
called {\em isotopic\/} if the corresponding diagrams are equal.

Concatenation of $2$-paths corresponds to the concatenation of diagrams:
if the bottom path of $\Delta_1$ and the top path of $\Delta_2$ have the
same labels, we can identify them and obtain a new diagram
$\Delta_1\circ\Delta_2$.

Note that for any atomic $2$-path $\delta=(p,f,q)$ in $\kk$ one can naturally
define its {\em inverse\/} $2$-path $\delta^{-1}=(p,f^{-1},q)$. The inverses of
all $2$-paths and diagrams are defined naturally. The inverse diagram $\Delta^{-1}$
of $\Delta$ is obtained by taking the mirror image of $\Delta$ with respect to a
horizontal line, and replacing labels of cells by their inverses.

Let us identify in the category $\Pi(\kk)$ all isotopic $2$-paths and also identify
each $2$-path of the form $\delta'\delta\delta^{-1}\delta''$ with $\delta'\delta''$.
The quotient category is obviously a groupoid (i.\,e. a category with invertible
morphisms). It is denoted by $\dd(\kk)$ and is called the {\em diagram groupoid\/}
of $\kk$. Two $2$-paths are called {\em homotopic\/} if they correspond to the same
morphism in $\dd(\kk)$. For each $1$-path $p$ of $\kk$, the local group of $\dd(\kk)$
at $p$ (i.e., the group of homotopy classes of $2$-paths connecting $p$ with
itself) is called the {\em diagram group of the directed $2$-complex $\kk$ with base\/}
$p$ and is denoted by $\DG(\kk,p)$.

The following theorem is proved in \cite{GuSa97} (see also \cite{GSdc}).

\begin{Theorem} If $\kk$ is the Dunce hat (see Figure \ref{f:1}) and $p$ is the edge of it,
then $\DG(\kk,p)$ is isomorphic to the R. Thompson group $F$. The generators $x_0, x_1$ of $F$ are depicted in Figure \ref{f:xx} (all edges in the diagrams are labeled by $x$ and oriented from left to right).
\end{Theorem}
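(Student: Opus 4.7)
The plan is to realize every $2$-path in the Dunce hat $\kk$ as a finite planar diagram built from triangular cells $x\to x^2$ and $x^2\to x$, and then identify the homotopy classes of such diagrams with reduced pairs of finite binary trees, which is the standard model for $F$.

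First I would analyze the shape of a $(p,p)$-diagram $\Delta\in\DG(\kk,p)$, where $p$ is the unique edge $x$. Such a diagram has top and bottom paths both labeled $x$ and every $2$-cell is (up to inversion) the single positive cell $x\to x^2$. Using the rewriting-style reduction available for any directed $2$-complex (as recalled in Section~\ref{ss:dc}), $\Delta$ is homotopic to a diagram of the form $\Delta_+\circ\Delta_-^{-1}$, where $\Delta_+$ uses only positive cells and $\Delta_-$ only positive cells (read so that $\topp(\Delta_\pm)=x$). Because the only positive cell has top $x$ and bottom $x^2$, the plane graph $\Delta_+$ is forced, by induction on the number of cells, to be a finite rooted binary tree $T_+$ with root at $\iota(\Delta)$, whose $n$ leaves sit along $\bott(\Delta_+)=x^n$; likewise $\Delta_-$ is a binary tree $T_-$ with the same number of leaves.

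Second I would pass from isotopy classes of such diagrams to homotopy classes. Two tree pairs $(T_+,T_-)$ and $(T_+',T_-')$ yield homotopic diagrams if and only if one can be obtained from the other by inserting or deleting a dipole, i.e.\ a positive cell immediately followed by its inverse. On tree pairs this is exactly the operation of inserting/removing a caret at the same position in both $T_+$ and $T_-$. Hence $\DG(\kk,p)$ is in bijection with the set of reduced pairs of finite binary trees with the same number of leaves. The concatenation $\Delta\circ\Delta'$ corresponds on tree pairs to first subdividing $T_-$ and $T_+'$ to a common refinement and then reading off the outer trees, which is precisely the multiplication rule for $F$ in the tree-pair description of \cite{CFP,Sbook}. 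Therefore the map $\Delta\mapsto(T_+,T_-)$ is a group isomorphism $\DG(\kk,p)\to F$.

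Finally, to pin down the generators $x_0$ and $x_1$ I would take the two diagrams in Figure~\ref{f:xx}, extract their tree pairs by the decomposition above, and check by direct inspection of carets that these are the two standard generator tree pairs of $F$.

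The main obstacle is the second step: proving that diagram homotopy in $\kk$ is generated exactly by dipole cancellation, so that reduced tree pairs are a set of unique normal forms. This is where one must use the specific combinatorics of the Dunce hat (a single positive cell with trivial context) together with the general fact that an atomic $2$-path and its inverse concatenate to a trivial morphism in $\dd(\kk)$; the first step, by contrast, is a purely local planarity argument driven by the uniqueness of $x\to x^2$.
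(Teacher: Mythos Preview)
The paper does not actually prove this theorem: it is quoted from \cite{GuSa97} (with a pointer also to \cite{GSdc}), so there is no in-paper argument to compare against. Your sketch is essentially the standard proof given in those references, namely the identification of reduced $(x,x)$-diagrams over the Dunce hat with reduced pairs of finite binary trees, together with the observation that concatenation-plus-reduction matches the tree-pair multiplication of $F$ from \cite{CFP}.

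One small remark on what you flag as the ``main obstacle'': in the framework the paper sets up in Section~\ref{ss:dc}, equivalence of diagrams is \emph{defined} via dipole cancellation (this is stated explicitly just before the definition of reduced diagrams), so you do not need to re-derive that homotopy is generated by dipoles. The genuine work in your Step~1 is the positive/negative decomposition $\Delta\simeq\Delta_+\circ\Delta_-^{-1}$; this is not automatic for arbitrary directed $2$-complexes, but for the Dunce hat it follows from the fact that the unique positive cell strictly increases path length, so the longest $1$-path through a reduced diagram separates the positive cells from the negative ones. With that in hand, the rest of your outline goes through as written.
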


\begin{figure}[h!]
\begin{center}
\unitlength 1mm 
\linethickness{0.4pt}
\ifx\plotpoint\undefined\newsavebox{\plotpoint}\fi 
\begin{picture}(94.5,37)(0,0)
\put(39.5,24){\circle*{2}}
\put(29.5,24){\circle*{2}}
\put(19.5,24){\circle*{2}}
\put(9.5,24){\circle*{2}}
\put(93.5,24){\circle*{2}}
\put(83.5,24){\circle*{2}}
\put(73.5,24){\circle*{2}}
\put(63.5,24){\circle*{2}}
\put(53.5,24){\circle*{2}}
\put(39.5,24){\line(-1,0){10}}
\put(29.5,24){\line(-1,0){10}}
\put(19.5,24){\line(-1,0){10}}
\put(93.5,24){\line(-1,0){10}}
\put(83.5,24){\line(-1,0){10}}
\put(73.5,24){\line(-1,0){10}}
\put(63.5,24){\line(-1,0){10}}
\bezier{120}(29.5,24)(19.5,35)(9.5,24)
\bezier{120}(39.5,24)(30.5,13)(19.5,24)
\bezier{256}(39.5,24)(23.5,52)(9.5,24)
\bezier{256}(39.5,24)(28.5,-4)(9.5,24)
\bezier{132}(83.5,24)(75.5,37)(63.5,24)
\bezier{108}(93.5,24)(82.5,15)(73.5,24)
\bezier{208}(93.5,24)(76.5,45)(63.5,24)
\bezier{176}(93.5,24)(81.5,8)(63.5,24)
\bezier{296}(93.5,24)(79.5,55)(53.5,24)
\bezier{296}(93.5,24)(84.5,-6)(53.5,24)
\put(24.5,2){\makebox(0,0)[]{$x_0$}}
\put(73.5,2){\makebox(0,0)[]{$x_1$}}
\end{picture}
\end{center}
\caption{Generators of the R. Thompson group $F$}
\label{f:xx}
\end{figure}
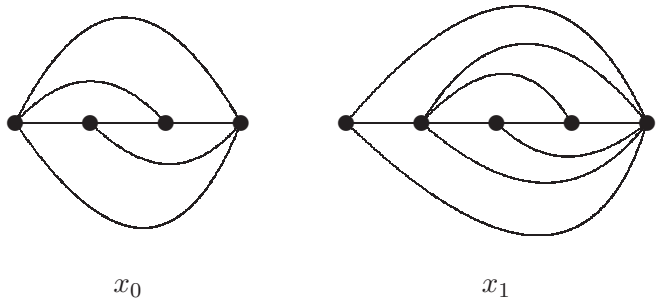

Diagrams over $\kk$ corresponding to homotopic $2$-paths are called
{\em equivalent\/}. The equivalence relation on the set of diagrams
(and the homotopy relation on the set of $2$-paths of $\kk$) can be
defined very easily as follows. We say that two cells $\pi_1$ and
$\pi_2$ in a diagram $\Delta$ over $\kk$ form a {\em dipole\/} if
$\bott(\pi_1)$ coincides with $\topp(\pi_2)$ and the labels of the
cells $\pi_1$ and $\pi_2$ are mutually inverse. Clearly, if $\pi_1$
and $\pi_2$ form a dipole, then one can remove the two cells from the
diagram and identify $\topp(\pi_1)$ with $\bott(\pi_2)$. The result
will be some diagram $\Delta'$. As in \cite{GuSa97}, it is easy to
prove that if $\delta$ is a $2$-path corresponding to $\Delta$, then
the diagram $\Delta'$ corresponds to a $2$-path $\delta'$, which is
homotopic to $\delta$. We call a diagram {\em reduced\/} if it does
not contain dipoles. A $2$-path $\delta$ in $\kk$ is called {\em reduced\/}
if the corresponding diagram is reduced.

Thus one can define morphisms in the diagram groupoid $\dd(\kk)$ as reduced
diagrams over $\kk$ with operation ``con\-cat\-enat\-ion + reduc\-tion"
(that is, the product of two reduced diagrams $\Delta$ and $\Delta'$ is the
result of removing all dipoles from $\Delta\circ\Delta'$ step by step; that process is confluent and terminating, so the result is uniquely determined \cite[Lemma 3.10]{GuSa97}).

\section{Savchuk's problem}
\label{s:sav}

\subsection{An explicit example}

Let $G$ be the subgroup of $F$ generated by the elements $y_0=x_0x_2$ and $y_1=x_1x_2$.

\begin{Lemma}\label{iso}
The subgroup $G$ is isomorphic to $F$ and the map $x_i\mapsto y_i$, $i=0,1$ extends to an isomorphism $F\to G$.
\end{Lemma}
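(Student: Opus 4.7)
The plan is to show that $y_0,y_1$ satisfy the two defining relations of $F$, which yields a surjective homomorphism $\phi:F\to G$ with $\phi(x_i)=y_i$; injectivity then follows from the well-known fact that every proper quotient of $F$ is abelian, together with an explicit check that $G$ is non-abelian.

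For the relations, the key cancellation is $y_0y_1\iv=(x_0x_2)(x_2\iv x_1\iv)=x_0x_1\iv$. Thus the two relations $[x_0x_1\iv,x_1^{x_0}]=1$ and $[x_0x_1\iv,x_1^{x_0^2}]=1$ become, after substitution, the statements $[x_0x_1\iv,y_1^{y_0}]=1$ and $[x_0x_1\iv,y_1^{y_0^2}]=1$. Applying the rewriting rules from Section~\ref{ss:nf} (in particular $x_0\iv x_1=x_2x_0\iv$, $x_0\iv x_jx_0=x_{j+1}$ for $j\ge 1$, $x_i\iv x_j=x_{j+1}x_i\iv$ for $j>i$, and $x_3x_2=x_2x_4$) one reduces
\[ y_1^{y_0}=x_2\iv x_0\iv x_1x_2x_0x_2 = x_2x_4,\qquad y_1^{y_0^2}=(y_1^{y_0})^{y_0}=x_4x_6.\]
A direct piecewise-linear calculation shows that $x_0x_1\iv$ is the identity on $[3/4,1]$, so its support lies in $[0,3/4]$. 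Meanwhile $\mathrm{fix}(x_1)=[0,1/2]$, and the inductive identity $\mathrm{fix}(x_{i+1})=x_0(\mathrm{fix}(x_i))$ (coming from $x_{i+1}=x_0\iv x_ix_0$) gives $\mathrm{fix}(x_i)=[0,1-2^{-i}]$ for all $i\ge 1$. Consequently $x_2x_4$ and $x_4x_6$ are supported in $[3/4,1]$; their supports meet the support of $x_0x_1\iv$ only at the single point $3/4$, so both commutators vanish. This verifies both defining relations of $F$ for the pair $(y_0,y_1)$ and produces the desired surjective homomorphism $\phi$.

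For injectivity I would use that every nontrivial normal subgroup of $F$ contains $[F,F]$: simplicity of $[F,F]$ shows that a normal subgroup $N$ of $F$ either contains $[F,F]$ or satisfies $[N,[F,F]]=\{1\}$, and in the latter case $N$ centralizes $[F,F]$, but any element of $F$ centralizing $[F,F]$ must preserve the setwise support of every element of $[F,F]$, hence fix every dyadic rational in $(0,1)$, hence be the identity. So $\ker\phi\ne\{1\}$ would force $G$ to be a quotient of $F/[F,F]\cong\Z^2$, i.e.\ abelian. One rules this out by evaluating at $1/2=.1$: using $x_0(1/2)=3/4$, $x_1(1/2)=1/2$, and $x_2$ identity on $[0,3/4]$ and acting on $[3/4,1]$ like $x_0$ rescaled, one gets $y_0(1/2)=3/4$, $y_1(1/2)=1/2$, and so $(y_1y_0)(1/2)=3/4$ while $(y_0y_1)(1/2)=y_1(3/4)=15/16$. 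Hence $y_0y_1\ne y_1y_0$, so $G$ is non-abelian and $\ker\phi=\{1\}$.

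The main obstacle is the normal-form bookkeeping that identifies $y_1^{y_0}$ and $y_1^{y_0^2}$ as short products $x_ix_j$ supported in the right end of $[0,1]$; once these identifications are in hand the support disjointness makes the commutator relations self-evident, and the injectivity argument is purely structural.
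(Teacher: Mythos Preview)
Your proof is correct and follows the same strategy as the paper: verify that $y_0,y_1$ satisfy the two defining relations of $F$, then use that every proper quotient of $F$ is abelian together with non-commutativity of $y_0,y_1$. The paper's proof is a bare two-line sketch that asserts these facts without computation; you have supplied the details (the nice identity $y_0y_1^{-1}=x_0x_1^{-1}$ and the support-disjointness argument for the relations, and an explicit centralizer argument in place of the paper's citation to Brown for the structure of proper quotients).
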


\begin{proof}
The generators $y_0,y_1$ satisfy the defining relations of $F$ and do not commute. Since all proper homomorphic images of $F$ are Abelian \cite[Theorem 4.13]{B}, the defined homomorphism from $F$ onto $G$ is an isomorphism.
\end{proof}

\begin{Proposition}\label{eve_nor}
The group $G$ is composed of all elements $w\in F$ whose normal form is of even length. In particular, $G$ has index $2$ inside $F$.
\end{Proposition}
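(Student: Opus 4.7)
The plan is to identify $G$ with the kernel of the parity homomorphism $\phi\colon F\to\Z/2\Z$ defined by $\phi(x_0)=\phi(x_1)=1$. The map is well defined because the two defining relators of $F$ are commutators, and via $x_{i+1}=x_1^{x_0^i}$ one obtains $\phi(x_i)=1$ for every $i\ge 0$. Together with the observation that each rewriting rule $\sigma_1,\dots,\sigma_6$ of Section~\ref{ss:nf} changes word length by an even amount -- and so does the passage from a semi-normal to a normal form (cf.\ \cite[Lemma~5]{SU}) -- this shows that $\phi(g)$ is exactly the parity of the length of the normal form of $g$. Thus $\ker\phi$ is precisely the set of elements of $F$ whose normal form has even length, and in particular it has index $2$ in $F$.

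Since $y_0=x_0x_2$ and $y_1=x_1x_2$ both have length two, $G\subseteq\ker\phi$, giving $[F:G]\ge 2$, so it remains only to establish $\ker\phi\subseteq G$. For this I would apply the Reidemeister--Schreier procedure to $\ker\phi$ using the right transversal $\{1,x_0\}$ and the finite generating set $\{x_0,x_1\}$ of $F$, which yields $x_0^2$, $x_1x_0^{-1}$ and $x_0x_1$ as Schreier generators of $\ker\phi$; it then suffices to show that each of the three belongs to $G$. Using the identity $y_0=x_0x_2=x_1x_0$, the first two are immediate:
\begin{equation*}
  y_1y_0^{-1}=x_1x_2\cdot x_2^{-1}x_0^{-1}=x_1x_0^{-1},\qquad y_0y_1^{-1}y_0=x_0x_1^{-1}\cdot x_1x_0=x_0^2.
\end{equation*}

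The main obstacle is exhibiting $x_0x_1$ as an element of $G$, since no short product of $y_0,y_1$ and their inverses visibly evaluates to $x_0x_1$. To handle this I would introduce the auxiliary element $y_2:=y_0^{-1}y_1y_0\in G$, which corresponds to $x_2$ under the isomorphism of Lemma~\ref{iso}. A short rewriting computation using the rules of Section~\ref{ss:nf} -- specifically $x_0^{-1}x_1=x_2x_0^{-1}$, $x_2x_0=x_0x_3$, $x_2x_1=x_1x_3$ and $x_3x_2=x_2x_4$ -- shows that $y_2=x_2x_4$ and $y_0y_1=x_0x_1x_2x_4$. Consequently $y_0y_1y_2^{-1}=x_0x_1\in G$, which exhausts the Schreier generators and completes the proof that $G=\ker\phi$, whence $[F:G]=2$.
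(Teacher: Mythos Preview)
Your proof is correct and takes a genuinely different route from the paper's. The paper does not introduce the parity homomorphism $\phi$ at all; instead, after noting the easy inclusion $G\subseteq\{\text{even-length normal forms}\}$, it proves the reverse inclusion by a four-step cascade of explicit computations showing that $x_i^{\pm1}x_j^{\pm1}\in G$ for \emph{all} pairs $i,j\ge 0$: first $x_n^2\in G$, then $x_ix_{i+1}\in G$, then $x_ix_j\in G$ for $i\le j$ by induction on $j-i$, and finally all signed pairs. Your approach short-circuits this by recognising the target subgroup as $\ker\phi$ and applying Reidemeister--Schreier with the transversal $\{1,x_0\}$, reducing the task to exhibiting just three elements $x_0^2$, $x_1x_0^{-1}$, $x_0x_1$ in $G$; the identifications $y_0=x_1x_0$, $y_2=x_2x_4$, $y_0y_1=x_0x_1x_2x_4$ then finish it off in a few lines. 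The gain is brevity and conceptual clarity (the index-$2$ statement and normality of $G$ are immediate from $G=\ker\phi$), at the modest cost of invoking Reidemeister--Schreier. One minor remark: once you know $\phi$ is well defined and $\phi(x_i)=1$ for all $i$, the equality $\phi(g)=|\text{normal form of }g|\bmod 2$ is automatic from the homomorphism property, so the appeal to the rewriting rules and \cite[Lemma~5]{SU} is not really needed.
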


\begin{proof}
Clearly, the normal form of every element in $G$ is of even length. For the other direction, we will prove in several steps that for every $i,j\ge 0$, the element $x_i^{\pm 1}x_j^{\pm 1}$ belongs to $G$.

Step 1: For every $n\ge 0 $ the element $x_n^2$ belongs to  $G$.

Indeed,
$$x_0^2=(x_0x_2)(x_1x_2)^{-1}(x_0x_2)\in G.$$
Since $x_0x_2\in G$ we have  $x_2^{-1}x_0^{-1}x_0^2=x_0x_3^{-1}\in G$, hence $x_3x_0^{-1}x_0^2=x_3x_0=x_0x_4\in G$. Furthermore $x_4^{-1}x_0^{-1}x_0^2=x_0x_5^{-1}\in G$. \\
Since $x_0^2\in G$ and $x_0x_2\in G$, we have that $(x_0^2)^{x_0x_2}=x_0^2x_2x_5^{-1}\in G$, hence $x_2x_5^{-1}\in G$.
Thus, we have that $x_0x_5^{-1} x_5x_2^{-1}=x_0x_2^{-1}\in G$, so $x_2x_0^{-1}=x_0^{-1}x_1\in G$ and $x_0x_1\in G$.

Since, $x_0x_2=x_1x_0\in G$ we have $x_1x_0x_0^{-2}x_0x_1=x_1^2\in G$. Thus, for every $n\ge 0$, we have
$$(x_1^2)^{x_0^{2n}}=x_{1+2n}^2\in G.$$
Similarly, $x_0x_3^{-1},x_3^2\in G$ imply that $x_0x_3=x_2x_0\in G$. Thus $$x_2x_0x_0^{-2}x_0x_2=x_2^2\in G.$$
Conjugating by $x_0^{2n}$ shows that for any $n\ge 0$, $x_{2+2n}^2\in G$.
So finally we have that $x_n^2\in G$ for all $n\ge 0$.

Step 2: The element $x_ix_{i+1}$ belongs to $G$ for all $i\ge 0$.

Indeed, we already have that $x_0x_1,x_1x_2\in G$. Now
$$x_2x_3=(x_2x_0)(x_0^{-2})(x_0x_3)\in G.$$
Conjugating $x_1x_2$ and $x_2x_3$ by $x_0^{2n}$ for every $n$, gives that for every $n\ge 0$, the element $x_nx_{n+1}\in G.$

Step 3: The element $x_ix_j$ belongs to $G$ for every $i\le j$.

The proof is by induction on the difference $j-i$. For $j-i\in\{0,1\}$, the statement follows from Steps 1 and 2. If $j-i>1$, then
$$x_ix_j=(x_ix_{i+1})(x_{i+1}^{-1}x_j).$$
From Step 2, $x_ix_{i+1}\in G$. By the induction hypothesis and Step 1, $x_{i+1}^{-1}x_j=x_{i+1}^{-2}(x_{i+1}{x_j})\in G$.

Step 4: The element $x_i^{\pm 1}x_j^{\pm 1}$ belongs to $G$ for all $i,j\ge 0$.

Indeed, by Step 1, it's enough to consider words of the form $x_ix_j$ since $x_i^{\pm 1}x_j^{\pm 1}=(x_{i}^{-2\epsilon})(x_ix_j)(x_j^{-2\delta})$ where $\epsilon, \delta\in \{0,1\}$. Thus, Step 3 and the relation $x_ix_j=x_jx_{i+1}$ for $i>j$ complete the proof.
\end{proof}

From Proposition \ref{eve_nor}, it follows that $G=\la x_0x_2,x_0x_1,x_1x_2,x_2x_3\ra=\la\arr F, x_0x_2\ra$.
We will prove below, that for every $w\notin \arr F$, we have $\la\arr F,w\ra\supseteq G$.
We start with the following special case.

\begin{Lemma}\label{l:87} We have: $\la \arr F, x_0^2\ra=G$.
\end{Lemma}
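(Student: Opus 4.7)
\noindent\emph{Proof plan.} Set $H:=\langle\arr F,x_0^2\rangle$. The inclusion $H\subseteq G$ is immediate: every generator $x_ix_{i+1}$ of $\arr F$ is a length-$2$ normal form, and $x_0^2$ is also a length-$2$ normal form, so Proposition~\ref{eve_nor} gives both $\arr F\subseteq G$ and $x_0^2\in G$. For the reverse inclusion I use the identity $G=\langle\arr F,x_0x_2\rangle$ noted just before the statement, and reduce the problem to showing $x_0x_2\in H$.

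The first move is to produce $x_0x_3\in H$. Conjugating the generator $x_0x_1\in\arr F$ by $x_0^2$, and using $x_0^{-1}x_1x_0=x_2$ together with $x_2x_0=x_0x_3$, one computes
\[x_0^{-2}(x_0x_1)x_0^2 \;=\; x_0^{-1}x_1x_0^2 \;=\; x_2x_0 \;=\; x_0x_3,\]
so $x_0x_3\in H$. Multiplying on the right by $(x_2x_3)^{-1}\in\arr F$ then yields $x_0x_2^{-1}=x_0x_3\cdot(x_2x_3)^{-1}\in H$.

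The second and main move is to exhibit $x_2^2\in H$; once we have it, the proof closes with $x_0x_2=(x_0x_2^{-1})\cdot x_2^2\in H$. The trick is to exploit the rewriting $x_{i+1}x_i=x_ix_{i+2}$ (a case of the defining relation $x_ix_j=x_jx_{i+1}$ for $i>j$) applied to squares of two-letter elements. A direct calculation yields the twin identities
\[(x_0x_3)^2 \;=\; x_0^2\,x_3x_5, \qquad (x_2x_3)^2 \;=\; x_2^2\,x_3x_5.\]
From the first identity, since $x_0^2,\,x_0x_3\in H$, we get $x_3x_5\in H$. From the second, since $x_2x_3\in\arr F\subseteq H$, we obtain $x_2^2=(x_2x_3)^2(x_3x_5)^{-1}\in H$. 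Combining with $x_0x_2^{-1}\in H$ gives $x_0x_2\in H$, so $G\subseteq H$.

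The step I expect to be the main obstacle is producing $x_2^2$. The natural combinations of $x_0^2$ with $\arr F$-generators tend to yield ``wrong sign'' elements such as $x_0x_2^{-1}$, $x_2x_0^{-1}$, or $x_0x_{2k+1}$, none of which is visibly $x_2^2$, and no short combination seems to give $x_0x_2$ on the nose. The crucial coincidence that unlocks the argument is that the two squares $(x_0x_3)^2$ and $(x_2x_3)^2$ share the \emph{same} tail $x_3x_5$, so the first square, built entirely from ingredients already in $H$, supplies precisely the factor that must be cancelled from the second in order to isolate $x_2^2$.
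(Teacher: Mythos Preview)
Your proof is correct, and each computation checks out: $x_0^{-2}(x_0x_1)x_0^2=x_0x_3$, then $(x_0x_3)^2=x_0^2x_4x_3=x_0^2x_3x_5$ and $(x_2x_3)^2=x_2^2x_4x_3=x_2^2x_3x_5$, so the common tail $x_3x_5$ lets you isolate $x_2^2$, and finally $x_0x_2=(x_0x_2^{-1})x_2^2$.

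The paper's argument is different and shorter but less self-contained: it imports from \cite[Lemma 4.4]{GS} the fact that $x_0^2x_1(x_0x_1x_2)^{-1}\in\arr F$, and then writes $x_0x_2=x_0^{-2}\bigl(x_0^2x_1(x_0x_1x_2)^{-1}\bigr)x_0^2(x_2x_3)$ in one line. So the paper trades your ``shared-tail'' trick with $(x_0x_3)^2$ and $(x_2x_3)^2$ for a black-box membership in $\arr F$ proved elsewhere. Your route has the advantage of being entirely internal to the present paper, using only the defining relations of $F$ and the already-established fact that $x_nx_{n+1}\in\arr F$; the paper's route has the advantage of a single-line computation once the cited lemma is granted.
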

\proof In the proof of \cite[Lemma 4.4]{GS}, we proved that $x_0^2x_1(x_0x_1x_2)^{-1}\in \arr F$. Thus,
$$ x_0x_2=x_0^{-2}(x_0^2x_1(x_0x_1x_2)^{-1})x_0^2(x_2x_3)\in\la \arr F,x_0^2\ra,$$
so $\la \arr F,x_0^2\ra$ contains $x_0x_2$ and thus, contains $G$. The opposite inclusion follows immediately from Proposition \ref{eve_nor}. \endproof

\begin{Lemma}\label{2_let}
Let $w=x_ix_j$ for $j\ge i\ge 0$ such that $j\neq i+1$. Then $\la \arr F,w\ra=G$.
\end{Lemma}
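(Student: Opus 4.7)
The plan is to verify both inclusions. The direction $\la\arr F,w\ra\subseteq G$ is immediate: by Proposition \ref{eve_nor}, $w=x_ix_j$ lies in $G$ since its normal form has even length, and $\arr F\subseteq G$, so the whole subgroup is in $G$. For the reverse inclusion, by Lemma \ref{l:87} it is enough to produce $x_0^2$ inside $\la\arr F,w\ra$. I would split into two cases according to whether $j\ge i+2$ or $j=i$.

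First, the case $j\ge i+2$. Since $j-1\ge i+1\ge 1$, the element $x_{j-1}x_j$ lies in $\arr F$, and
\[
w\cdot (x_{j-1}x_j)^{-1}\;=\;x_ix_{j-1}^{-1}\;\in\;\la\arr F,w\ra.
\]
Applying rule $\sigma_2$ (valid because $j-1>i$) gives $x_{j-1}^{-1}x_i=x_ix_j^{-1}$, whence
\[
(x_ix_{j-1}^{-1})^2\cdot(x_{j-1}x_j)\;=\;x_i(x_{j-1}^{-1}x_i)x_j\;=\;x_i(x_ix_j^{-1})x_j\;=\;x_i^2,
\]
so $x_i^2\in\la\arr F,w\ra$. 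This reduces everything to the case $w=x_i^2$.

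Second, the case $w=x_i^2$. If $i=0$ we are done by Lemma \ref{l:87}. For $i\ge 1$, the key identity is
\[
(x_{i-1}x_i)(x_i^{-2})(x_{i-1}x_i)(x_i^{-2})(x_ix_{i+1})\;=\;x_{i-1}^2,
\]
which one verifies by cancelling each $x_i^{-2}$ against adjacent $x_i$'s, leaving $x_{i-1}x_i^{-1}x_{i-1}x_{i+1}$, and then applying $\sigma_2$ in the form $x_i^{-1}x_{i-1}=x_{i-1}x_{i+1}^{-1}$ to finish. Since $x_{i-1}x_i$ and $x_ix_{i+1}$ both lie in $\arr F$, this places $x_{i-1}^2\in\la\arr F,x_i^2\ra$. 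Iterating $i$ times brings the index down to $x_0^2$, and Lemma \ref{l:87} completes the argument.

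The main obstacle is spotting the second identity; once written down, both verifications amount to short direct computations using the single rewrite $\sigma_2$ (``push a negative index past a smaller positive one''), which powers both reductions uniformly.
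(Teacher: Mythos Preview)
Your proof is correct and takes a genuinely different, more streamlined route than the paper's. The paper argues in three steps: it first treats $i=0$ with an induction on $j$ (showing $x_0x_{j-2}=(x_{j-3}x_{j-2})(x_0x_j)(x_{j-1}x_j)^{-1}\in\la\arr F,x_0x_j\ra$); it then reduces general $i$ with $j>i+1$ to the $i=0$ case via a parity-dependent trick (conjugation by $(x_0x_1)^{-1}$ when $i$ is even, and passing from $x_ix_j$ to $x_{i+1}x_{j+2}$ when $i$ is odd); and finally it handles $i=j$ by producing $x_{i-1}x_{i+1}=(x_{i-1}x_i)x_i^{-2}(x_ix_{i+1})$ and appealing back to the previous step. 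Your argument instead collapses every case $j\ge i+2$ to $x_i^2$ in a single two-line computation, and then walks the square down index by index from $x_i^2$ to $x_0^2$ with one uniform identity, finishing with Lemma~\ref{l:87}. This avoids the even/odd split, the separate base case $i=0$, and the cross-reference between steps; it uses only two short explicit identities in place of several. The paper's organization has the minor advantage of staying entirely within products $x_ax_b$, matching how the lemma is invoked later in Theorem~\ref{main}, but since the final statement is identical this is cosmetic.
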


\begin{proof}
Proposition \ref{eve_nor} implies that the subgroup $\la\arr F,w\ra$ is contained in $G$. We prove the other direction in several steps.

Step 1: The lemma holds for $i=0$ and any $j\neq 1$.

Indeed, for $x_0x_2$, it follows from the definition of $G$.
The case  $j=0$ is Lemma \ref{l:87}. For $j=3$,
$$x_0^2=(x_0x_1)(x_0x_3)(x_2x_3)^{-1}\in \la \arr F,x_0x_3\ra,$$
and we are done by the previous case.
For $j>3$,
$$x_0x_{j-2}=(x_{j-3}x_{j-2})(x_0x_j)(x_{j-1}x_j)^{-1}\in \la \arr F,x_0x_j\ra,$$
so $\la \arr F, x_0x_j\ra$ contains $\la\arr F, x_0x_{j-2}\ra$ and we are done by induction.

Step 2: The lemma holds for every $i,j\ge 0$ such that $j>i+1$.

Indeed, for $i=0$ this is true by Step 1. \newline
We prove the statement for every even $i$ by induction. For $i=2$ and $j>3$,
$$x_0x_{j-2}=(x_0x_1)(x_2x_j)(x_1x_2)^{-1}\in \la \arr F, x_2x_j\ra.$$
Thus, $x_0x_{j-2}\in \la \arr F, x_2x_j\ra$ and by Step 1 we are done.
For any even $i>2$ and $j>i+1$,
$$ x_{i-2}x_{j-2}=(x_ix_j)^{(x_0x_1)^{-1}}\in\la\arr F, x_ix_j\ra.$$
Thus, by the induction hypothesis for $i-2$ we are done.

Now, suppose $i,j\ge 0$, $i$ is odd and $j>i+1$. Then
$$x_{i+1}x_{j+2}=(x_ix_j)^{-1}(x_ix_{i+1})(x_{j+1}x_{j+2})\in \la \arr F,x_ix_j\ra.$$
Since $i+1$ is even, we are done by the previous case.

Step 3: The lemma holds if $i=j$.

Indeed, for $i=j=0$, this is proved in Step 1. For any $i>0$,
$$x_{i-1}x_{i+1}=(x_{i-1}x_i)x_i^{-2}(x_ix_{i+1})\in \la \arr F,x_i^2\ra$$
and thus we are done by Step 2.
\end{proof}

In order to extend Lemma \ref{2_let} to arbitrary words $w$, we shall need new definitions and a  few auxiliary lemmas.

Let $w$ be a positive normal form. To find the normal form of the product $x_iw$ for some $i\ge 0$, one uses the rewriting rule $(\sigma_1)$ from $\Sigma$, as long as possible. That is, if $w=x_{i_1}\dots x_{i_n}$ and $i_1<i$, then $x_i$ moves over $x_{i_1}$ and becomes $x_{i+1}$. Then if for the second letter $i_2<i+1$, then $x_{i+1}$ continues moving and skips over $x_{i_2}$. 

\begin{Definition}
Let $w=x_{i_1}\dots x_{i_n}$ be a positive normal form. We say that the letter $x_i$, $i\ge 0$ \emph{skips over} $w$ if $x_iw=wx_{i+n}$ in $F$.
\end{Definition}

\begin{Lemma}\label{l:skips}
Let $w=x_{i_1}\dots x_{i_n}$ be a positive normal form and $x_i$, $i\ge 0$ a letter. The following assertions hold.
\begin{enumerate}
\item[(1)] The letter $x_i$ skips over $w$ if and only if for all $j=1,\dots,n$ we have $i_j<i+j-1$.
\item[(2)] If $x_i$ skips over $w$, then for all $k>i$ the letter $x_k$ also skips over $w$.
\end{enumerate}
\end{Lemma}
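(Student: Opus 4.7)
The plan is to work with the rewriting system $\Sigma$. Since both $x_iw$ and $wx_{i+n}$ are positive words, their normal forms coincide with their semi-normal forms and are obtainable by iteratively applying the single rule $(\sigma_1)$. The key bookkeeping observation I would use is that after $j-1$ successful swaps applied to $x_ix_{i_1}\cdots x_{i_n}$, the word has the form
$x_{i_1}\cdots x_{i_{j-1}}\,x_{i+j-1}\,x_{i_j}\cdots x_{i_n}$,
and the next swap is legal precisely when $i_j<i+j-1$.

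For the ``if'' direction of~(1), assuming $i_j<i+j-1$ for every $j$, all $n$ swaps succeed and produce literally $wx_{i+n}$, so $x_iw=wx_{i+n}$ holds in $F$. For the ``only if'' direction I would argue by contrapositive. Suppose the condition fails and let $j^{\ast}$ be the smallest index with $i_{j^{\ast}}\ge i+j^{\ast}-1$. Applying $(\sigma_1)$ the maximum possible $j^{\ast}-1$ times yields
$W:=x_{i_1}\cdots x_{i_{j^{\ast}-1}}\,x_{i+j^{\ast}-1}\,x_{i_{j^{\ast}}}\cdots x_{i_n}$;
the minimality of $j^{\ast}$ together with the assumption that $w$ is in normal form guarantees that $W$ is already semi-normal, and hence is the normal form of $x_iw$. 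Separately, I would obtain the normal form of $wx_{i+n}$ by pushing $x_{i+n}$ leftward via $(\sigma_1)$: it lands in position $m+1$, where $m$ is the largest index in $\{1,\dots,n\}$ with $i_m\le i+n$ (or $m=0$ if no such index exists). A letter-by-letter comparison of these two normal forms must force $j^{\ast}=n+1$, which contradicts $j^{\ast}\le n$. The main technical obstacle will be this bookkeeping comparison, split according to whether $m+1$ is less than, equal to, or greater than $j^{\ast}$; the delicate case is $m+1>j^{\ast}$, in which one deduces $i_{j^{\ast}}=\cdots=i_m=i+j^{\ast}-1$ and then $i+n=i+j^{\ast}-1$, yielding the desired contradiction.

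Part~(2) then follows immediately from~(1): if $x_i$ skips over $w$, then $i_j<i+j-1\le k+j-1$ for every $j$ whenever $k>i$, so the characterisation in~(1) applies to $x_k$ as well.
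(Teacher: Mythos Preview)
Your argument is correct, and your overall strategy---reduce both $x_iw$ and $wx_{i+n}$ to their unique positive semi-normal forms via $(\sigma_1)$ and then compare---works. The case analysis you outline for the comparison (according to the relative position of $m+1$ and $j^{\ast}$) goes through exactly as you indicate; in particular, in the case $m+1>j^{\ast}$ the equalities at positions $j^{\ast},j^{\ast}+1,\dots,m$ force $i_{j^{\ast}}=i_{j^{\ast}+1}=\dots=i_m=i+j^{\ast}-1$, and then position $m+1$ yields $i+n=i_m=i+j^{\ast}-1$, contradicting $j^{\ast}\le n$.

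The paper reaches the same conclusion by a shorter and more conceptual route. After obtaining (as you do) the normal form $W=x_{i_1}\cdots x_{i_{j^{\ast}-1}}x_{i+j^{\ast}-1}x_{i_{j^{\ast}}}\cdots x_{i_n}$ of $x_iw$, it does \emph{not} compute the normal form of $wx_{i+n}$ explicitly. Instead it observes that any derivation from $wx_{i+n}$ to $W$ must consist entirely of applications of $(\sigma_1)$, and that each application of $(\sigma_1)$ strictly increases the sum of all subscripts. Since the subscript sum of $wx_{i+n}$ is $\bigl(\sum_j i_j\bigr)+i+n$ while that of $W$ is $\bigl(\sum_j i_j\bigr)+i+j^{\ast}-1$, one gets $n\le j^{\ast}-1$, the desired contradiction, with no case analysis at all. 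Your approach is more explicit and self-contained (it never appeals to a monotone invariant), but the paper's index-sum trick is considerably shorter and avoids the letter-by-letter bookkeeping. Part~(2) is handled identically in both arguments.
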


\begin{proof}
Part (2) follows immediately from Part (1). Indeed, if the inequalities in (1) hold for $i$ then they hold for all $k>i$.
Let us prove Part (1). If for all $j=1,\dots,n$ we have $i_j<i+j-1$, then $x_i$ clearly skips over $w$. For the other direction, assume that $x_iw=wx_{i+n}$ in $F$ but there exists $j=1,\dots,n$ such that $i_j\geq i+j-1$. Let $r$ be the minimal such $j$ and $w_2$ the suffix of $w$ starting from $x_{i_r}$. Let $w_1$ be the prefix of $w$ of length $r-1$, so that $w=w_1w_2$. Applying the rewriting rule $(\sigma_1)$ to $x_iw$ as long as possible, we get the normal form $w_1x_{i+r-1}w_2$.

We claim that there must exist a sequence of applications of rules from $\Sigma$ (a {\em derivation}) turning $wx_{i+n}$ into $w_1x_{i+r-1}w_2$. Indeed, since $\Sigma$ is confluent and terminating there is a unique semi-normal form which one can get by applying a sequence of applications of rules from $\Sigma$ to the word $wx_{i+n}$. The resulting semi-normal form is necessarily positive, and as such, it is a normal form. By uniqueness, it must be equal to $w_1x_{i+r-1}w_2$.

Since the word $wx_{i+n}$ is positive, a derivation applied to it must consist entirely of applications of $(\sigma_1)$. Each application of $(\sigma_1)$ increases the sum of indices of letters in the word. In contradiction to the sum of indices of letters in $wx_{i+n}=w_1w_2x_{i+n}$ being strictly greater than the corresponding sum for the normal form $w_1x_{i+r-1}w_2$.
\end{proof}

\begin{Definition}
Let $w=x_{i_1}\dots x_{i_n}$ be a positive normal form. We say that $w$ is a \emph{block} if $x_{i_1}\neq x_{i_n}$ and for every  $j=1,\dots,n$ we have $i_j<i_1+j$.
\end{Definition}

\begin{Remark}\label{r:block}
Let $w=x_{i_1}\dots x_{i_n}$ be a positive normal form. It follows form Lemma \ref{l:skips} that $w$ is a block if and only if
\begin{enumerate}
\item[(1)] $w$ contains at least two distinct letters; and
\item[(2)] $x_{i_1+1}$ skips over $w$; equivalently, $x_k$ skips over $w$ for all $k>i_1$.
\end{enumerate} 
Since the equalities $x_iw=wx_{i+n}$ and $x_i\iv w =wx_{i+n}\iv$ in $F$ are equivalent, one could replace the second condition of Remark \ref{r:block} by the condition: $x_k^{\pm 1}w=wx_{k+n}^{\pm 1}$ for all $k> i_1$.
\end{Remark}


Intuitively, a block is a positive normal form with at least two distinct letters, such that for all $i$, if the letter $x_i$ skips over the first letter of $w$, then it skips over the entire $w$. Examples of blocks: $x_0x_1$, $x_2^2x_{4}^3x_7$, etc.

\begin{Lemma}\label{3_pro}
Let $B=x_{i_1}\dots x_{i_n}$ be a block.
Then
\begin{enumerate}
\item [(1)] If $B'$ is a \emph{translation} of $B$, i.e., $B'=x_{i_1+k}\dots x_{i_n+k}$ for some $k\ge 0$, then $B'$ is also a block.
\item[(2)] For every $j\neq i_1$ we have $x_j^{-1}B=B'x_r^{-1}$, where $B'$ is a translation of $B$ and $r\ge 0$.
\item [(3)] If $B_1=x_{i_2}\dots x_{i_n}$ is not a block (in which case, we say that $B$ is a \emph{minimal} block), then there exists $j\in\{2,\dots,n\}$ such that $i_j=i_1+j-1$.
\end{enumerate}
\end{Lemma}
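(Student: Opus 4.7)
I'd handle the three parts independently, from easiest to hardest. Part (1) follows directly from the definitions: translating each index of $B$ by the same $k \ge 0$ preserves the inequality $i_j + k < (i_1 + k) + j$ and the non-equality $i_1 + k \neq i_n + k$, and it keeps the indices non-decreasing so that $B'$ is still a positive normal form.

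For Part (2) I would split on whether $j < i_1$ or $j > i_1$. In the case $j < i_1$, every index of $B$ strictly exceeds $j$ (by monotonicity), so I can repeatedly apply rewriting rule $(\sigma_3)$ of the system $\Sigma$ to push $x_j^{-1}$ rightward across $B$, each step of the form $x_j^{-1} x_{i_k} \to x_{i_k+1} x_j^{-1}$. The outcome is $x_j^{-1} B = B' x_j^{-1}$, where $B'$ is the translation of $B$ by $1$ and $r = j$. In the case $j > i_1$, the defining inequality $i_k < i_1 + k$ is exactly the condition from Lemma \ref{l:skips}(1) that $x_{i_1+1}$ skips over $B$; by Lemma \ref{l:skips}(2) so does $x_j$. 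Invoking the equivalence noted in Remark \ref{r:block} then gives $x_j^{-1} B = B x_{j+n}^{-1}$, so $B' = B$ (translation by $0$) and $r = j + n \ge 0$.

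Part (3) is the combinatorial heart of the lemma, and where I would spend most of my care. My plan is to juxtapose the block inequality for $B$, namely $i_j \le i_1 + j - 1$ for every $j = 1, \ldots, n$, with the inequality that $B_1 = x_{i_2} \cdots x_{i_n}$ would need to satisfy, namely $i_j \le i_2 + j - 2$ for $j = 2, \ldots, n$. I would then split according to which way $B_1$ fails to be a block. If the index condition fails at some $j$, i.e., $i_j \ge i_2 + j - 1$, then combined with the $B$-inequality this forces $i_2 \le i_1$, so by monotonicity $i_2 = i_1$ and $i_j = i_1 + j - 1$, giving exactly the desired $j$. If instead $i_2 = i_n$, then all of $i_2, \ldots, i_n$ coincide; setting $j := i_2 - i_1 + 1$ then yields $i_j = i_2 = i_1 + (j-1)$, and this $j$ lies in $\{2, \ldots, n\}$ because $i_1 < i_n = i_2$ forces $j \ge 2$ while the block inequality $i_n \le i_1 + n - 1$ forces $j \le n$. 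The main obstacle is the careful bookkeeping in this second case and verifying that $j$ falls in the required range; once the two failure modes of $B_1$ are cleanly separated, the rest reduces to arithmetic on the single chain of inequalities $i_1 \le i_2 \le \cdots \le i_n \le i_1 + n - 1$.
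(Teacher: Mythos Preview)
Your proof is correct and, for Parts (1) and (2), identical to the paper's. For Part (3) you organize the argument by which block axiom $B_1$ violates (the index inequality or the distinct-endpoints condition), whereas the paper first notes that the block inequality for $B$ at position $2$ forces $i_2\in\{i_1,\,i_1+1\}$: if $i_2=i_1+1$ then $j=2$ works outright, and if $i_2=i_1$ then $B_1$ automatically has distinct endpoints (since $i_1\ne i_n$), so only the index condition can fail. Your organization therefore needs the explicit formula $j=i_2-i_1+1$ to handle the case $i_2=i_n$, which the paper sidesteps; but both arguments ultimately rest on the same squeeze $i_2+j-1\le i_j\le i_1+j-1$, so the difference is purely in the case split, not in the underlying idea.
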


\begin{proof}
(1) Follows easily from the definition.

(2) If $j>i_1$, then by Remark \ref{r:block} $x_j^{\pm 1}B=Bx_{j+n}^{\pm 1}$. Otherwise, the rewriting rule $x_j^{-1}x_i\rightarrow x_{i+1}x_j^{-1}$ for $i>j$, shows that $x_j\iv B=B'x_j^{-1}$, where $B'$ is a translation of $B$ (with the translation number $k=1$).

(3) If $x_{i_2}=x_{i_1+1}$, then $B$ satisfies $(3)$ for $j=2$. Otherwise, $B$ being a block, implies that $i_2=i_1$ and thus the length $|B|=n>2$. It follows that the first and last letters of $B_1$ are distinct. Thus, if $B_1$ is not a block, then there exists some $j=2,\dots,n$  such that $i_j\ge i_2+j-1=i_1+j-1$. Now the fact that $B$ is a block guarantees that $i_j<i_1+j$. Thus, $i_j=i_1+j-1$.
\end{proof}

\begin{Lemma}\label{pos}
Every double coset $C=\arr F a \arr F$ of $\arr F$ contains a positive element $w$ such that for every $w'\in C$, we have $|w|\le |w'|$.
\end{Lemma}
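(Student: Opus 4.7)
\medskip\noindent\textbf{Proof plan.}
The plan is to prove the stronger statement that for every $w\in F$ there exists a positive element $w^*\in w\arr F$ with $|w^*|\le |w|$. This suffices: if $w$ is any minimum-length representative of the double coset $C=\arr F a\arr F$, then the corresponding $w^*$ lies in $w\arr F\subseteq C$, and by minimality of $|w|$ one obtains $|w^*|=|w|$, producing the desired positive element of minimum length in $C$.

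I would prove the stronger statement by induction on the length of the negative part $q^{-1}$ of the normal form $w=pq^{-1}$ of~\eqref{NormForm}. The base case $q^{-1}=1$ is immediate. For the inductive step, write $w=p\,x_{j_n}^{-t_n}\cdots x_{j_1}^{-t_1}$ with $j_1<\cdots<j_n$, so that the rightmost letter of $w$ is $x_{j_1}^{-1}$. Since $x_{j_1}x_{j_1+1}\in\arr F$ for every $j_1\ge 0$ (recalled in Section~\ref{sec:pre}), the element $w_1:=w\cdot x_{j_1}x_{j_1+1}$ lies in $w\arr F$. As a word $w_1$ has length $|w|+2$, but after the immediate free cancellation $x_{j_1}^{-1}x_{j_1}=1$ it has length $|w|$ and exactly $(\sum_i t_i)-1$ negative letters. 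Running the rewriting system $\Sigma$ from Section~\ref{ss:nf} yields a semi-normal form of length at most $|w|$, and the subsequent semi-normal-to-normal conversion can only decrease length further. Moreover, none of the rules $\sigma_1,\ldots,\sigma_4$ alters the negative-letter count, while $\sigma_5$, $\sigma_6$ and the semi-normal-to-normal rewriting only remove matched positive-negative pairs. Hence the normal form of $w_1$ has length at most $|w|$ and strictly fewer negative letters than $w$; the inductive hypothesis applied to $w_1$ then furnishes a positive $w^*\in w_1\arr F\subseteq w\arr F$ with $|w^*|\le |w_1|\le |w|$.

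The point requiring care is verifying that the semi-normal-to-normal conversion cannot introduce new negative letters; this is immediate, since that conversion only removes pairs $x_i, x_i^{-1}$ arising from violations of condition~(2) in the definition of the normal form, rather than creating any. With that bookkeeping in place, the induction terminates after at most $\sum_i t_i$ iterations and delivers the required positive minimum-length representative of $C$.
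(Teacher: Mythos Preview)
Your proof is correct and takes essentially the same approach as the paper: both multiply on the right by $x_ix_{i+1}\in\arr F$, where $x_i^{-1}$ is the last letter of the normal form, to shorten the negative part without increasing length, and then iterate. The only cosmetic difference is that the paper starts directly with a minimal-length representative of $C$, whereas you first establish the slightly stronger right-coset statement and then specialize; your bookkeeping on the rewriting system $\Sigma$ is more detailed than the paper's terse version, but the underlying idea is identical.
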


\begin{proof}
Let $w\in C$ be an element of minimal length. If $w$ is positive, we are done. Otherwise, it is possible to replace $w$ by a positive word, without increasing its length, as follows. Let $x_i^{-1}$ be the last letter in the normal form of $w$.  Replacing $w$ by $w(x_ix_{i+1})$ yields a word in $C$ of the same length, with the negative part of the normal form being shorter. Thus, proceeding by induction, gives the result.
\end{proof}

\begin{Lemma}\label{no_blo}
Let $C$ be a double coset of $\arr F$ and let $w$ be a positive normal form of minimal length in $C$. Then $w$ does not contain any block.
\end{Lemma}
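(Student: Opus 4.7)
I argue by contradiction. Suppose $w$ is a positive normal form of minimal length in $C$ but $w$ contains a block. By iteratively passing to the subblock $B_1=x_{i_2}\cdots x_{i_n}$ whenever it remains a block, I may assume $w=uBv$ with $u,v$ positive (possibly empty) and $B=x_{i_1}\cdots x_{i_n}$ a \emph{minimal} block. By Lemma~\ref{3_pro}(3), $B$ is of one of two mutually exclusive types: (a) $B=x_{i_1}x_{i_1+1}^{n-1}$; or (b) $i_2=i_1$ and there is a smallest index $j^*\ge 3$ with $i_{j^*}=i_1+j^*-1$.

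The central computation is that $(x_{i_1}x_{i_1+1})^{-1}B$ equals, as an element of $F$, a positive word $\tilde B$ of length $n-2$. In type (a), $\tilde B=x_{i_1+1}^{n-2}$ directly. In type (b), one step of rule $(\sigma_2)$ applied to the initial $x_{i_1+1}^{-1}x_{i_1}$ gives $x_{i_1}x_{i_1+2}^{-1}x_{i_3}\cdots x_{i_n}$; the minimality of $j^*$ forces $i_k\le i_1+k-2$ for every $k<j^*$, so successive applications of $(\sigma_2)$ push the negative letter rightward with its index incrementing by one at each step, and at position $j^*$ the inverse's index equals $i_1+j^*-1=i_{j^*}$, so rule $(\sigma_5)$ cancels the pair. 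The result is a positive word $\tilde B=x_{i_1}x_{i_3}\cdots x_{i_{j^*-1}}x_{i_{j^*+1}}\cdots x_{i_n}$ of length $n-2$.

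If $u$ is empty, then $(x_{i_1}x_{i_1+1})^{-1}w=\tilde Bv$ is a positive word of length $|w|-2$ in $C$, contradicting minimality; if $v$ is empty, a symmetric right-multiplication argument applies. The hard case is when both $u$ and $v$ are nonempty: left-multiplying by $(x_{i_1}x_{i_1+1})^{-1}$ alone actually lengthens $w$, because pushing this inverse through $u$ by $(\sigma_2)$ and then through $B$ via the skip identity $x_k^{\pm 1}B=Bx_{k+n}^{\pm 1}$ from Remark~\ref{r:block} introduces two new negative letters on the right end. The plan is to compensate by simultaneously right-multiplying by a carefully chosen element of $\arr F$ (a product of Jones generators $x_jx_{j+1}$) that cancels these new negative letters against the leading portion of $v$, with the telescoping driven by the skip property. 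As a model, for $w=x_0x_2x_3x_5$ with $B=x_2x_3$, the identity
\[
(x_3x_4)^{-1}(x_0x_1)^{-1}\,w\,(x_4x_5)^{-1}=x_5^{-1}x_1^{-1}
\]
exhibits a length-$2$ element of $C$. The main technical work is constructing the analogous compensating product in full generality; doing so yields an element of $C$ strictly shorter than $w$, the desired contradiction.
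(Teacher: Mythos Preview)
Your computation that $(x_{i_1}x_{i_1+1})^{-1}B$ is a positive word of length $n-2$ is correct, and the type (a)/(b) dichotomy for minimal blocks is fine. The case $u$ empty is therefore complete. However, the remaining cases are not.

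First, the ``symmetric right-multiplication argument'' you invoke when $v$ is empty is not actually symmetric: the block property governs how letters skip over $B$ from the \emph{left} (Remark~\ref{r:block}), and there is no analogous statement for right-multiplication, so this case is not handled.

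More seriously, for $u$ nonempty you explicitly leave the proof unfinished: you write that ``the main technical work is constructing the analogous compensating product in full generality'' and provide only a single worked example with $|u|=1$. This is a genuine gap. There is no indication of how the compensating product of Jones generators is to be built for arbitrary $u$, nor why the resulting telescoping would always terminate in a shorter word.

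The paper avoids this entire difficulty by a much simpler device: rather than attacking the block in place, it first migrates the block to the front. If $u=x_jw_1''$, then $(x_jx_{j+1})^{-1}w\in C$; by Lemma~\ref{3_pro}(2), pushing $x_{j+1}^{-1}$ rightward through $w_1''B$ either shortens $w$ (contradiction) or produces $w_1'B'w_2'$ with $B'$ a translation of $B$ and $|w_1'|=|u|-1$. Induction on $|u|$ reduces to the case $u$ empty, which you have already handled. No right-multiplications or compensating products are needed at all.
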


\begin{proof}
We begin by making a general observation.
Let $W$ be a normal form which contains a block $B$. Thus, we have $W=w_1Bw_2$ for some possibly empty $w_1$ and $w_2$. It follows easily from Lemma \ref{3_pro}(2) that if one multiplies $W$ from the left by $x_j^{-1}$ for some $j$, then
either $x_j^{-1}W$ is shorter than $W$, or $x_j^{-1}W$ is of the form $w_1'B'w_2'$ for some $w_1',B',w_2'$ such that $|w_1'|=|w_1|$, $|w_2'|=|w_2|+1$ and such that $B'$ is a translation of $B$.

Now, let $C=\arr F w\arr F$ be a double coset of $\arr F$ and use Lemma \ref{pos} to assume that $w$ is a positive normal form of minimal length in $C$. We claim that $w$ does not contain any block $B$.
Indeed, assume by contradiction that $w$ is of the form $w=w_1Bw_2$ for some minimal block $B$.
It is possible to replace $w$ by an element $w'\in C$ of the same length, such that $w'=B'w_2'$ where $B'$ is a translation of $B$, as follows.
If $w_1$ is empty, we take $w'=w$. Otherwise, let $w_1=x_jw_1''$.
Then $w'':=(x_jx_{j+1})^{-1}w=x_{j+1}^{-1}w_1''Bw_2 \in C$. By the above observation and the minimality of $w$ we have that $w''=w_1'B'w_2'$, when $|w_1'|=|w_1''|=|w_1|-1$, $|w_2'|=|w_2|+1$ and $B'$ is a translation of $B$. Proceeding by induction on the length of the prefix $w_1$, we get the result.

Thus, we shall assume that $w=Bw_2$. Let $B=x_{i_1}\dots x_{i_n}$. Then
$$t=(x_{i_1}x_{i_1+1})^{-1}w=x_{i_1+1}^{-1}x_{i_1}^{-1}Bw_2=x_{i_1+1}^{-1}x_{i_2}\dots x_{i_n}w_2\in C.$$
By lemma \ref{3_pro}(3), the minimality of the block $B$ implies that there exists some $j=2,\dots n$ such that $i_j=i_1+j-1$. If $j$ is the minimal such index, then the letter $x_{i_j}$ cancels in $t$ with $x_{i_1+1}^{-1}$, in contradiction with  $w$ having minimal length in $C$ (by Lemma \ref{pos}).
\end{proof}

\begin{Theorem}\label{main}
Let $w\in F\setminus \arr F$. Then the group $\la \arr F, w\ra$ contains $G$.
If the normal form of $w$ is of even length, then $\la \arr F,w\ra=G$.
Otherwise, $\la \arr F,w\ra=F$.
\end{Theorem}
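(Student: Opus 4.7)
The plan is to focus on establishing the containment $G\subseteq\la\arr F,w\ra$ for every $w\in F\setminus\arr F$; the parity dichotomy then follows immediately. Indeed, if the normal form of $w$ has even length then $w\in G$ by Proposition \ref{eve_nor}, so $\la\arr F,w\ra\subseteq G$, giving $\la\arr F,w\ra=G$; if instead the normal form of $w$ has odd length, then $w\notin G$, so $\la\arr F,w\ra$ properly contains the index-$2$ subgroup $G$ and must equal $F$.

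To prove $G\subseteq\la\arr F,w\ra$, I first observe that this subgroup depends only on the double coset $\arr F w\arr F$, since any two representatives differ by $\arr F$-elements on the left and right. By Lemma \ref{pos} I may therefore replace $w$ by a positive element of minimal length in its double coset, and by Lemma \ref{no_blo} this representative contains no block as a consecutive subword. I then induct on $|w|$. The case $|w|=0$ is excluded since $w\notin\arr F$. For $|w|=1$, writing $w=x_i$, the identity $x_k(x_{k-1}x_k)^{-1}=x_{k-1}^{-1}$ for $k\ge 1$ (with $x_{k-1}x_k\in\arr F$) gives, by iteration, $x_0\in\la\arr F,w\ra$; then $x_1=x_0^{-1}(x_0x_1)\in\la\arr F,w\ra$, so $\la\arr F,w\ra=F\supseteq G$. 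For $|w|=2$, the no-block hypothesis combined with $w\notin\arr F$ forces $w=x_ix_j$ with $i\le j$ and $j\ne i+1$ (possibly $i=j$), and Lemma \ref{2_let} closes the case.

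For $|w|\ge 3$, the plan is to produce an element $w'\in\la\arr F,w\ra\setminus\arr F$ with $|w'|<|w|$ and then invoke the induction hypothesis on $w'$. Writing $w=x_{i_1}\cdots x_{i_n}$, I aim to exploit the no-block structure via Lemma \ref{3_pro}(3): the absence of any block in $w$ means that at some position, multiplying $w$ on the appropriate side by a length-two generator $x_kx_{k+1}\in\arr F$ will, after applying the rewriting system $\Sigma$ to bring the product to normal form, yield a word strictly shorter than $n$. Lemma \ref{l:skips} provides the bookkeeping for which letters skip past which under these multiplications, and parts (1)--(2) of Lemma \ref{3_pro} ensure that translated blocks behave predictably so that the reduction is well-defined. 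The resulting $w'$ remains outside $\arr F$: if it were in $\arr F$, then multiplying back by the $\arr F$-element used to produce it would place $w$ in $\arr F$ as well, contradicting $w\notin\arr F$.

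The main obstacle is this inductive step. The no-block hypothesis is a purely combinatorial restriction on $w$, but converting it into an actual strict reduction of the normal form length requires a careful case analysis, split on whether $i_2=i_1$ or $i_2\ge i_1+2$ (and symmetrically on the tail), tracking through the rewriting rules $\sigma_1,\ldots,\sigma_6$ exactly how the chosen $\arr F$-generator interacts with the letters of $w$. One must confirm the manipulation produces a cancellation rather than merely shifting indices, and also verify that no fresh block arises in the resulting shorter word that would silently land it back in $\arr F$. Combining Lemmas \ref{l:skips}, \ref{3_pro}, \ref{pos}, and \ref{no_blo} into a single uniform reduction procedure is where the technical heart of the theorem lies.
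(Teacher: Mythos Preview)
Your reduction to a positive minimal-length representative containing no block, and your treatment of the cases $|w|\le 2$, match the paper. The gap is in the inductive step for $|w|\ge 3$. You propose to multiply $w$ on one side by some $x_kx_{k+1}\in\arr F$ and obtain a strictly shorter normal form. But this is precisely what the minimality hypothesis forbids: any product $uw$ or $wv$ with $u,v\in\arr F$ lies in the same double coset $\arr F w\arr F$, and you chose $w$ to have minimal length there. So no one-sided (or indeed two-sided) multiplication by $\arr F$-elements can shorten $w$, and your induction never gets off the ground. Your appeal to Lemma \ref{3_pro}(3) is also misplaced, since that lemma concerns minimal blocks and your $w$ contains no block at all.

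The paper does not induct on $|w|$. Instead, for $|w|\ge 3$ it writes $w=w'x_j^n$ with $w'$ empty or ending in a letter $\ne x_j$, and then conjugates a suitable $\arr F$-element \emph{by} $w$ (not the other way around): one computes $(x_{j-k}x_{j-k+1})^{w}=x_jx_{j+n+1}$ where $k=|w'|$, provided the ``skipping'' condition $x_{j-k}w'=w'x_j$ holds. This lands directly on a two-letter word to which Lemma \ref{2_let} applies, with no length induction needed. The role of the no-block hypothesis is then to force that skipping condition: assuming it fails, one locates (via Lemma \ref{l:skips}) an index $r$ with $i_r=i+r-2$ for the minimal $i$ that skips over $w'$, and the suffix of $w$ from position $r$ is then a block, contradicting the choice of $w$. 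The moral is that the no-block condition is not a device for shortening $w$ inside its double coset, but rather a guarantee that the conjugation $w^{-1}(x_{j-k}x_{j-k+1})w$ collapses to something of length two.
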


\begin{proof}
We denote $H=\la \arr F,w\ra$. It suffices to prove that $H\supseteq G$, since the rest of the theorem follows easily from Proposition \ref{eve_nor}. Clearly, we can replace $w$ by any element of the double coset $C=\arr Fw\arr F$. Thus, by Lemmas \ref{pos} and \ref{no_blo}, we can assume that $w$ is positive and does not contain any block.
If $w=x_i$ for some $i\ge 0$, then $H=F$. If $w$ is of length $2$, then $w=x_ix_j$ such that $j\neq i+1$ (indeed, $w$ is not a block). Thus, we are done by Lemma \ref{2_let}.
If $|w|>2$, we write $w=w'x_j^n$ where $j\ge 0$, $n\ge 1$ and $w'$ is either empty, or its last letter is not $x_j$.
If $w'$ is empty, then
$$x_jx_{j+1+n}=(x_jx_{j+1})^{x_j^n}\in H$$
and we are done by Lemma \ref{2_let}.
Assume that $w'$ is not empty and let $k=|w'|$.
If  $$(*)\ \ j\ge k\ \hbox{ and }\ x_{j-k}w'=w'x_j,$$ then
$x_{j-k}$ skips over $w'$ and so $x_{j-k+1}$ also skips over $w'$ (Lemma \ref{l:skips}(2)). As such we have that
\begin{equation*}
\begin{split}
(x_{j-k}x_{j-k+1})^w & =(w'x_j^n)^{-1}(x_{j-k}x_{j-k+1})w'x_j^n=(w'x_j^n)^{-1}w'(x_{j}x_{j+1})x_j^n \\
& = (w'x_j^n)^{-1}w'x_j^{n+1}x_{j+n+1}=x_jx_{j+n+1}\in H
\end{split}
\end{equation*}
and we are done by Lemma \ref{2_let}.

Thus it suffices to prove that $(*)$ must hold. Assume by contradiction that it doesn't.
Note, that in that case, for every $i\ge 0$, if $x_i$ skips over $w'$ then $x_i$ skips over the entire $w$. That is, if for some $i$, we have $x_iw'=w'x_{i+k}$, then $x_iw'x_j^n= w'x_j^nx_{i+k+n}$.
Indeed, $x_iw'=w'x_{i+k}$ implies that $x_iw'x_j^n=(w'x_{i+k})x_j^n$. If $i+k>j$, the result is clear. If $d=j-(i+k)\ge 0$, then since $x_i$ skips over $w'$ and $i+d\ge i$, by Lemma \ref{l:skips}(2) we have $x_{i+d}w'=w'x_j$, in contradiction to $(*)$ not holding (note that $i+d=j-k\ge 0$).

Let $i$ be the minimal index such that $x_iw'=w'x_{i+k}$. 
Let $w'=x_{i_1}\dots x_{i_k}$. We claim that there is some index $i_r$ such that $i_r=i+r-2$.
Indeed, $x_i$ skipping over $w'$, implies that for all $r=1,\dots k$, we have $i_r<i+r-1$ (Lemma \ref{l:skips}(1)).
If for all $r$, we have that $i_r<i+r-2$, then one can replace $x_i$ with $x_{i-1}$, by contradiction to the minimality of $i$ (note that $i\ge 1$).

Let $B$ be the suffix of $w$ starting from $x_r$ for $r$ as above. That is, $B= x_{i_r}\dots x_{i_k}x_j^n$ Let $A$ be the prefix of $w$ of length $r-1$ so that $w=AB$.
Then from the above, $x_iw=x_iAB=Ax_{i+r-1}B=ABx_{i+k+n}$. In particular, $x_{i+r-1}$ skips over $B$.
Since the first letter of $B$ is $x_{i_r}=x_{i+r-2}$ and the first and last letters of $B$ are distinct, $B$ is a block inside $w$ (by Remark \ref{r:block}), in contradiction to the choice of $w$.

%
%
%
\end{proof}

\begin{Remark}
There are several descriptions of the subgroup $G$ of $F$. Vaughan Jones in \cite{Jones} mentioned that $\arr F$ is contained in the subgroup $G'$ of  $F$ consisting of functions whose slope at 1 is of the form $2^{2n}$, $n\in \Z$. In \cite{Bleak} that subgroup is denoted by $K_{(1,2)}$ (it is proved there that this subgroup is isomorphic to $F$). Clearly Theorem \ref{main} shows that $G'=G$.
\end{Remark}

Saul Schleimer asked (private communication) whether there are subgroups strictly between $\arr F$ and $G$. Theorem \ref{main} answers that question:

\begin{Corollary}\label{cor}
$\arr F$ is a maximal subgroup of infinite index inside $G$.
\end{Corollary}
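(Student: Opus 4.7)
The corollary has two assertions: $\arr F$ is maximal in $G$, and $[G:\arr F] = \infty$. My plan is that both will follow quickly from Theorem~\ref{main} combined with standard facts about $F$.

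For maximality: I take any $w \in G \setminus \arr F$. By Proposition~\ref{eve_nor}, the normal form of $w$ has even length, so Theorem~\ref{main} gives $\la \arr F, w\ra = G$ (the alternative $\la \arr F, w\ra = F$ is ruled out because $w \in G$). Hence no proper subgroup of $G$ strictly contains $\arr F$, i.e., $\arr F$ is maximal in $G$.

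For infinite index, I plan to argue by contradiction. Suppose $[G:\arr F] < \infty$. Since $[F:G] = 2$ by Proposition~\ref{eve_nor}, this forces $[F:\arr F] < \infty$ as well. It is known from \cite{CFP} that every finite-index subgroup of $F$ contains the commutator subgroup $[F,F]$, so $\arr F \supseteq [F,F]$. Next, consider the abelianization map $\pi \colon F \to F/[F,F] \cong \Z^2$, under which $x_0 \mapsto (1,0)$ and $x_i \mapsto (0,1)$ for all $i \ge 1$. A direct computation from the generating sets $\arr F = \la x_0x_1, x_1x_2, x_2x_3\ra$ and $G = \la x_0x_2, x_1x_2\ra$ yields $\pi(\arr F) = \pi(G) = \la (1,1), (0,2)\ra$. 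Since both $\arr F$ and $G$ contain $[F,F]$ and map onto the same subgroup of $\Z^2$, this forces $\arr F = G$. But $\arr F \cong F_3$ by \cite{GS}, while $G \cong F$ by Lemma~\ref{iso}, and Brown's groups $F_n$ are pairwise non-isomorphic \cite{B}; hence $\arr F \ne G$, which is the desired contradiction.

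The only step relying on external input is the non-equality $\arr F \ne G$, for which I rely on the identification $\arr F \cong F_3$ established in \cite{GS} together with the pairwise non-isomorphism of Brown's groups $F_n$. Everything else is a short application of Theorem~\ref{main}, Proposition~\ref{eve_nor}, and the cited fact that finite-index subgroups of $F$ contain $[F,F]$.
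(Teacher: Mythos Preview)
Your proof is correct. The maximality argument is exactly what the paper intends: the corollary is stated as an immediate consequence of Theorem~\ref{main} with no further proof, and your first paragraph spells out precisely that deduction.

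For the infinite-index assertion the paper gives no argument at all (it is implicitly taken as known from \cite{GS} that $\arr F$ has infinite index in $F$, hence in $G$), so your second paragraph supplies detail the paper omits. Your argument is sound, though it can be shortened: once you know $\arr F\cong F_3$, note that $F_3$ has abelianization $\Z^3$ (immediate from its infinite presentation), whereas any finite-index subgroup $H\le F$ contains the simple---hence perfect---group $[F,F]$, forcing $[H,H]=[F,F]$ and $H^{ab}\cong\Z^2$. This gives the contradiction directly, without the intermediate step $\arr F=G$. One minor quibble: the citation \cite{B} for ``$F_n$ pairwise non-isomorphic'' is a bit loose---Brown's paper introduces the groups but does not emphasize this; the non-isomorphism is most cleanly seen via the abelianizations $F_n^{ab}\cong\Z^n$.
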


The next theorem gives a counterexample to Savchuk's problem \cite[Problem 1.5]{Sav}.
\begin{Theorem}\label{t:11}
Let $\Psi\colon F \to G$ be the isomorphism taking $x_i$ to $y_i$, $i=0,1$ (see Lemma \ref{iso}). Then $H=\Psi^{-1}(\arr F)$ is a maximal subgroup of infinite index in $F$ and $H$ does not stabilize any $x\in (0,1)$.
\end{Theorem}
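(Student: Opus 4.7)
The statement has three claims: $H$ is maximal in $F$, $H$ has infinite index in $F$, and $H$ fixes no point of $(0,1)$. The first two will follow immediately from Corollary \ref{cor} via the isomorphism $\Psi$: since $\Psi\colon F \to G$ is an isomorphism (Lemma \ref{iso}), the lattice of subgroups of $F$ containing $H = \Psi^{-1}(\arr F)$ corresponds bijectively to the lattice of subgroups of $G$ containing $\arr F$, and by Corollary \ref{cor} the latter lattice is $\{\arr F, G\}$. Hence $H$ is maximal in $F$, and $[F:H] = [G:\arr F] = \infty$.

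For the main claim I proceed by contradiction. Assume $H \subseteq \Stab_F(x)$ for some $x \in (0,1)$. Since $x_0$ moves every point of $(0,1)$, $\Stab_F(x) \neq F$, and maximality of $H$ forces $H = \Stab_F(x)$. Now $H \cong \arr F$, which by \cite{GS} is isomorphic to the Brown--Thompson group $F_3$; in particular $H$ is generated by three elements (the $\Psi$-preimages of the generators $x_0x_1, x_1x_2, x_2x_3$ of $\arr F$), so $H^{\mathrm{ab}}$ has rank at most $3$. I then split into cases according to the arithmetic nature of $x$.

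If $x$ is a dyadic rational, then $\Stab_F(x) \cong F_{[0,x]} \times F_{[x,1]} \cong F \times F$ via restriction, so $\Stab_F(x)^{\mathrm{ab}} = \Z^4$ has rank $4$, contradicting the bound on $H^{\mathrm{ab}}$. If $x$ is irrational, then for any $f \in F$ fixing $x$ the linear segment of $f$ through $(x,x)$ has the form $y = 2^k t + x(1-2^k)$, and since the constant must be dyadic, irrationality of $x$ forces $k=0$; hence $f$ is the identity on some dyadic interval containing $x$, and $\Stab_F(x)$ is a strictly ascending union of subgroups $F_{[0,a_n]} \times F_{[b_n,1]}$ over nested dyadic intervals $[a_n,b_n]\ni x$, so $\Stab_F(x)$ is not finitely generated---contradicting finite generation of $H$.

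The non-dyadic rational case $x = p/q$ is the main obstacle: here elements of $\Stab_F(x)$ can have nontrivial germ $2^{kd}$ at $x^{\pm}$ for some $d \geq 1$ (determined by the odd part of $q$), so one cannot reduce immediately to the identity-near-$x$ subgroup. To handle it, I would analyze the germ homomorphism $\Stab_F(x) \to \Z \times d\Z \times d\Z \times \Z$ recording slopes at $0, x^{-}, x^{+}, 1$ and construct explicit fixing elements realizing each prescribed combination of slopes, showing that this homomorphism has image of rank $4$; this forces $\Stab_F(x)^{\mathrm{ab}}$ to have rank $\geq 4$, once again contradicting the rank bound on $H^{\mathrm{ab}}$ and completing the proof.
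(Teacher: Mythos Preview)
Your treatment of the first two claims (maximality and infinite index via the isomorphism $\Psi$ and Corollary~\ref{cor}) is correct and matches the paper.

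For the third claim the paper takes a much more direct route than yours: it simply exhibits a single element of $H$ with no fixed point in $(0,1)$. Concretely, one checks the identity $x_0x_1 = y_0y_1y_0^{-1}y_1^{-1}y_0$ in $G$, so that $\Psi^{-1}(x_0x_1)=x_0x_1x_0^{-1}x_1^{-1}x_0=x_0x_1x_2^{-1}\in H$, and then reads off from the action on binary expansions that $x_0x_1x_2^{-1}$ moves every point of $(0,1)$. This avoids any structural analysis of point stabilizers.

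Your indirect strategy, by contrast, has a genuine error in the non-dyadic rational case. You propose a germ homomorphism $\Stab_F(x)\to \Z\times d\Z\times d\Z\times\Z$ recording slopes at $0,\,x^{-},\,x^{+},\,1$ and claim its image has rank $4$. But elements of $F$ have breakpoints only at \emph{dyadic} rationals; since $x$ is non-dyadic, no $f\in F$ can have a breakpoint at $x$, so the left and right derivatives at $x$ coincide for every $f\in\Stab_F(x)$. The germ map therefore factors through $\Z\times d\Z\times\Z$ and its image has rank at most $3$, not $4$. With only the bound $\operatorname{rk} H^{\mathrm{ab}}\le 3$ that you derived from three-generation of $\arr F$, you get no contradiction. (In fact Theorem~\ref{th:a} in Section~\ref{5} gives that $H_{\{x\}}$ is $3$-generated in this case, so the generator-count comparison is genuinely inconclusive.) To rescue the argument you would need a sharper invariant---for instance, pinning down $\arr F^{\,\mathrm{ab}}$ exactly and comparing it to the abelianization of the ascending HNN extension described in Section~\ref{5}---but at that point the direct computation in the paper is far less work.
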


\begin{proof}
It follows from Corollary \ref{cor} that $H$ is a maximal subgroup of $F$ of infinite index.
Recall that $y_0=x_0x_2$ and $y_1=x_1x_2$. One can check that
$$x_0x_1=y_0y_1y_0^{-1}y_1^{-1}y_0.$$
Thus,
$$\Psi^{-1}(x_0x_1)=x_0x_1x_0^{-1}x_1^{-1}x_0=x_0x_1x_2^{-1}\in H.$$
On binary fractions, $g=x_0x_1x_2^{-1}$ acts as follows:
\[g(t)=
  \begin{cases}
   .0\alpha & \hbox{ if }   t=.00\alpha  \\
   .10\alpha  & \hbox{ if } t=.010\alpha  \\
   .1100\alpha & \hbox{ if } t=.011\alpha  \\
   .1101\alpha & \hbox{ if } t=.10\alpha  \\
   .111\alpha & \hbox{ if }  t=.11\alpha
  \end{cases}
\]
It is clear from the binary description, that $x_0x_1x_2^{-1}$ does not stabilize any $x\in (0,1)$. Indeed, $\Psi$ changes a prefix of every binary fraction other than $0=.0^N$ and $1=.1^N$.
\end{proof}

Since $\arr F$ is isomorphic to the Thompson-Brown group $F_3$ \cite{GS}, we have

\begin{Corollary} The R. Thompson group $F$ has a maximal subgroup which is isomorphic to $F_3$.
\end{Corollary}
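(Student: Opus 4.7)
The plan is to chain together the ingredients already assembled in the paper, so the corollary should follow essentially for free from Theorem \ref{t:11} and the identification $\arr F \cong F_3$ established in \cite{GS}. First I would recall that by Lemma \ref{iso}, the map $\Psi \colon F \to G$ sending $x_i \mapsto y_i$ for $i=0,1$ is an isomorphism of groups, so in particular its restriction to the full preimage of any subgroup of $G$ is a group isomorphism onto that subgroup.

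Next, I would apply this observation to the subgroup $H = \Psi^{-1}(\arr F)$ of $F$ appearing in Theorem \ref{t:11}. Since $\Psi$ is an isomorphism, the restriction $\Psi|_H \colon H \to \arr F$ is an isomorphism of groups. Combined with the fact (recorded in Section \ref{ss:dc}-style references earlier in the paper, and proved in \cite{GS}) that $\arr F$ is isomorphic to the Brown--Thompson group $F_3$, we obtain a chain of isomorphisms $H \cong \arr F \cong F_3$.

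Finally, Theorem \ref{t:11} already states that $H$ is a maximal subgroup of infinite index in $F$, so this $H$ is the desired maximal subgroup of $F$ isomorphic to $F_3$. There is no real obstacle here: every piece has been prepared by the preceding results, and the corollary is just the act of assembling them. The only thing worth being careful about is pointing out that $H$ being maximal in $F$ of infinite index (rather than merely inside $G$, as in Corollary \ref{cor}) is exactly the content of Theorem \ref{t:11}, so no further argument about the lattice of subgroups above $H$ is needed.
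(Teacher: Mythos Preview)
Your argument is correct and matches the paper's own reasoning: the corollary is deduced immediately from Theorem \ref{t:11} together with the isomorphism $\arr F\cong F_3$ from \cite{GS}, via the isomorphism $\Psi$ of Lemma \ref{iso}. The only quibble is that the fact $\arr F\cong F_3$ is recalled in the ``Jones' subgroup'' subsection rather than in Section \ref{ss:dc}, but this does not affect the mathematics.
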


\subsection{Implicit examples} \label{ss:imp}

Here is another idea how to prove existence of maximal subgroups of infinite index in $F$ which do not fix a point in $(0,1)$. We will show below that this idea actually works.

Let $g,h$ be two elements of $F$ such that $M_0=\la g,h\ra$ does not fix a point in $(0,1)$. Suppose that the images of $g,h$ in the free Abelian group $F/[F,F]$ generate the whole group $F/[F,F]$. Then $M_0$ cannot be contained in any non-trivial subgroup of finite index in $F$ because subgroups of finite index all contain the derived subgroup $[F,F]$. If $M_0\ne F$, then $M_0$ is contained in a maximal subgroup $M$ of $F$ (by Zorn's lemma since $F$ is finitely generated), and $M$ is of infinite index in $F$ and does not fix any point in $(0,1)$. The problem is how to prove that $M_0\ne F$. As we have mentioned in the introduction there is no known algorithm to decide when two (or any finite number $> 1$) of elements of $F$ do not generate the whole $F$. But here is a partial algorithm which works quite often even when the images of these elements in $F/[F,F]$ generate the whole $F/[F,F]$.

Recall the procedure (first discovered by Stallings \cite{Sta}) of checking if an element $g$ of a free group $F_n$ belongs to the subgroup $H$ generated by elements $h_1,...,h_k$. Take paths labeled by $h_1,...,h_k$. Identify the initial and terminal vertices of these paths to obtain a bouquet of circles $K'$ with a distinguished vertex $o$. Then \emph{fold} edges as follows: if there exists a vertex with two outgoing edges of the same label, we identify the edges. As a result of all the foldings (and removing the hanging trees), we obtain the \emph{Stallings core} of the subgroup $H$ which is a finite automaton $A(H)$ with $o$ as its input/output vertex. Then $g\in H$ if and only if $A(H)$ accepts the reduced form of $g$. It is well known that the Stallings core does not depend on the generating set of the subgroup $H$.

In the case of diagram groups we will use the same strategy but instead of automata we will have directed complexes and instead of words - diagrams. We give the construction for general diagram groups and then apply it to Thompson group $F$. 

\begin{Definition} Let $\kk=\la E_{\kk}\mid \F_{\kk}\ra$ be a directed 2-complex. A \emph{2-automaton} over $\kk$ is a directed 2-complex $\La=\la E_{\La}\ |\F_{\La}\ra$ with two distinguished 1-paths $p_{\La}$ and $q_{\La}$ (the input and output 1-paths), together with a map $\phi$ from $\La$ to $\kk$ which takes vertices to vertices, edges to edges and cells to cells, which is a homomorphism of directed graphs and commutes with the maps $\topp, \bott$ and $\iv$.
 We shall call $\phi$ an \emph{immersion}.
\end{Definition}

For example, every diagram $\Delta$ over $\kk$ is a 2-automaton with a natural immersion $\phi_\Delta$ and the distinguished paths $\topp(\Delta)$ and $\bott(\Delta)$.

\begin{Definition} Let $\La, \La'$ be two 2-automata over $\kk$. A map $\psi$ from $\La'$ to $\La$ which takes vertices to vertices, edges to edges and cells to cells, which is a homomorphism of directed graphs and commutes with the maps $\topp, \bott, \iv$ and the immersions is called a \emph{morphism} from $\La'$ to $\La$ provided $\psi(p_{\La'})=p_{\La}, \psi(q_{\La'})=q_{\La}$.
\end{Definition}

\begin{Definition} We say that a 2-automaton $\La$ over $\kk$ \emph{accepts} a diagram $\Delta$ over $\kk$ if there is a morphism $\psi$ from the 2-automaton $\Delta$ to the 2-automaton $\La$.
\end{Definition}

Let $\Delta$ be a diagram over $\kk$ with top and bottom paths having the same labels (i.e., a \emph{spherical diagram} in terminology of \cite{GuSa97}). Let us identify $\topp(\Delta)$ with $\bott(\Delta)$. We can view the result as a  2-automaton $\La'$ over $\kk$ drawn on a sphere with
the distinguished paths $p=q=\topp(\Delta)=\bott(\Delta)$. The 2-automaton $\La'$ clearly accepts $\Delta$, and any diagram of the form $\Delta\circ \Delta\circ ...$.

Suppose we want a 2-automaton  that accepts all reduced diagrams that are equal to $\Delta^n$ for some $n$. Then we should do an analog of the Stallings foldings. Namely, let $\La'$ be the 2-automaton as above.  Now every time we see two cells that have the same image under the immersion of $\La'$ and share the top (resp. bottom) 1-path, then we identify their bottom (resp. top) 1-paths and identify the cells too. This operation is called \emph{folding of cells} (see \cite[Remark 8.8]{GSdc}).
Clearly the result is a directed 2-complex and the immersion of $\La'$ induces an immersion of the new directed 2-complex. Thus we again get a 2-automaton. Let $\La$ be the 2-automaton obtained after all possible foldings in $\La'$.   Then $\Delta\circ \Delta\circ...$ is still accepted by $\La$. But also the reduced diagrams $\Delta^n$ are accepted by $\La$ for every $n$.

More generally, let $\Delta_1, ...,\Delta_n$ be (reduced) diagrams from the diagram group $\DG(\kk,u)$ i.e., diagrams over $\kk$ with the same label $u$ of their top and bottom paths. Then we can identify all $\topp(\Delta_i)$ with all $\bott(\Delta_i)$ and obtain a 2-automaton $\La'$ over $\kk$ with the distinguished paths $p=q=\topp(\Delta_i)=\bott(\Delta_i)$. We can view $\La'$ as a ``bouquet of spheres''. That automaton accepts any concatenation of diagrams $\Delta_i$ and their inverses. Let us do all the foldings in $\La'$. We obtain a 2-automaton $\La$.

The proof of the following lemma is quite similar to the proof of \cite[Lemma 3.10]{GuSa97} and is left to the reader as an exercise.

\begin{Lemma} \label{l:88} The rewriting system where objects are finite 2-automata over a directed 2-complex $\kk$ and moves are foldings, is confluent and terminating.
\end{Lemma}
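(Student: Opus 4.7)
The plan is to mimic the proof of \cite[Lemma 3.10]{GuSa97}.

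For termination, to each finite 2-automaton $\La$ over $\kk$ I would associate the non-negative integer $N(\La) = |V(\La)| + |E(\La)| + |\F_\La|$. Any folding of two cells $f_1, f_2$ with $\phi(f_1) = \phi(f_2)$ sharing a top (resp. bottom) 1-path identifies the cells $f_1$ and $f_2$ and then identifies their two bottom (resp. top) 1-paths edge by edge; this strictly decreases $N$ (at the very least one cell, plus at least one edge, are merged, while no new vertices, edges, or cells are created). Since $N(\La) \ge 0$, any chain of foldings stabilizes after finitely many steps.

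For confluence, since the system is terminating, by Newman's Lemma it suffices to verify local confluence: whenever $\La \to \La_1$ and $\La \to \La_2$ by two distinct foldings, the automata $\La_1$ and $\La_2$ admit a common reduct. If the two foldings involve disjoint pairs of cells whose attaching 1-paths share no edges, they clearly commute. If they overlap---sharing a cell, an edge, or a vertex---then performing either folding does not destroy the pair witnessing the other folding; rather, it still enables the other folding (or a folding whose effect refines it), and the induced cascade of identifications is the same in both orders because every identification is forced by the preservation of $\phi$, $\topp$, $\bott$, and $\iv$.

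An economical way to package the argument is the observation that the eventual fully folded 2-automaton is isomorphic to the quotient of $\La$ by the smallest equivalence relation $\sim$ on the disjoint union of vertices, edges and cells which (i) is compatible with $\topp$, $\bott$, $\iv$ and the immersion $\phi$, and (ii) whenever two cells $f$, $f'$ satisfy $\phi(f) = \phi(f')$ and their top (resp. bottom) 1-paths are $\sim$-equivalent edge-by-edge, identifies $f$ with $f'$ and their bottom (resp. top) 1-paths. Every single folding produces a quotient of $\La$ by an equivalence coarser than or equal to $\sim$, and any folding sequence that cannot be continued must realise exactly $\sim$ (for otherwise one of the defining closure conditions of $\sim$ would exhibit a further folding). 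Since $\sim$ depends only on $\La$, every maximal folding sequence produces the same 2-automaton, namely $\La/\!\sim$, which gives confluence.

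The main obstacle I anticipate is the local confluence case analysis when the two competing foldings overlap on a cell or share edges on their attaching 1-paths; one has to check that the cascade of induced edge and vertex identifications triggered by one folding does not make the other ill-defined but rather produces the same cumulative identifications in both orders. This is the 2-dimensional analogue of the classical Stallings folding argument for free groups, and is essentially mechanical once the equivalence relation $\sim$ above is set up, so I would write out one or two representative overlap cases and leave the rest to the reader, exactly as in \cite{GuSa97}.
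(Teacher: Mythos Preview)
The paper does not actually prove this lemma; it states that the argument is similar to \cite[Lemma 3.10]{GuSa97} and is left to the reader as an exercise. Your approach---a decreasing complexity measure for termination, and Newman's Lemma together with the canonical folding-closed equivalence relation $\sim$ for confluence---is exactly the intended argument. Two minor slips worth fixing: a folding is only guaranteed to merge one \emph{cell} (the two bottom paths could in principle already coincide edge-by-edge), which still suffices to make $N$ strictly decrease; and in your confluence paragraph, each folding produces a quotient by an equivalence \emph{finer} than (i.e., contained in) $\sim$, not coarser.
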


By Lemma \ref{l:88} and \cite[Lemma 1.7.8]{Sbook} the 2-automaton $\La$ is uniquely determined by $\La'$.

\begin{Remark} Since folding is a ``local'' operation, Lemma \ref{l:88} implies that even if we start with an infinite directed 2-automaton over a directed 2-complex $\kk$, the end result of all the foldings does not depend on the order in which the foldings are performed.
\end{Remark}

\begin{Lemma} \label{l:au} The 2-automaton $\La$ accepts all reduced diagrams from the subgroup of the diagram group $\DG(\kk,u)$ generated by $\Delta_1,\ldots , \Delta_n$.
\end{Lemma}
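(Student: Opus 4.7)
The plan is to establish the lemma in two phases: first, that every (unreduced) concatenation of the $\Delta_i^{\pm 1}$ is accepted by $\La$; second, that acceptance by the fully folded $\La$ is preserved under dipole removal. Combining the two, for any reduced $\Gamma\in\la\Delta_1,\dots,\Delta_n\ra$, I would pick a presentation $\tilde\Gamma=\Delta_{i_1}^{\epsilon_1}\circ\cdots\circ\Delta_{i_k}^{\epsilon_k}$ that reduces to $\Gamma$, show $\La$ accepts $\tilde\Gamma$, and then propagate acceptance through each dipole reduction until reaching $\Gamma$.

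For the first phase, $\tilde\Gamma$ is accepted by $\La'$ essentially by construction: the $j$-th factor $\Delta_{i_j}^{\epsilon_j}$ embeds onto the $i_j$-th ``sphere'' of the bouquet (with orientation reversed when $\epsilon_j=-1$), and consecutive factors agree on the common boundary $p_{\La'}=q_{\La'}$ to which all tops and bottoms of the $\Delta_i$ were glued. This yields a morphism $\tilde\Gamma\to\La'$. Each folding is itself a morphism of 2-automata that commutes with the immersion, so composing with the finite sequence of foldings furnished by Lemma~\ref{l:88} produces a morphism $\tilde\Gamma\to\La$.

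The heart of the argument is the dipole step. Suppose $\psi\colon\Xi\to\La$ is a morphism and $\pi_1,\pi_2$ form a dipole in $\Xi$, with $\bott(\pi_1)=\topp(\pi_2)$ and mutually inverse labels. Then the cells $\psi(\pi_1)$ and $\psi(\pi_2)^{-1}$ have the same image in $\kk$ under the immersion, and they share a common \emph{bottom} $1$-path:
\[
\bott(\psi(\pi_1))=\psi(\bott(\pi_1))=\psi(\topp(\pi_2))=\topp(\psi(\pi_2))=\bott(\psi(\pi_2)^{-1}).
\]
Since $\La$ admits no further folding, this forces $\psi(\pi_1)=\psi(\pi_2)^{-1}$ in $\La$, and therefore also $\topp(\psi(\pi_1))=\bott(\psi(\pi_2))$. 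Consequently, after excising $\pi_1,\pi_2$ and identifying $\topp(\pi_1)$ with $\bott(\pi_2)$ to form the reduced diagram $\Xi'$, the restriction of $\psi$ descends to a well-defined morphism $\Xi'\to\La$. Iterating this step down to the reduced form of $\tilde\Gamma$ gives acceptance of $\Gamma$.

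The main obstacle is exactly this identification $\psi(\pi_1)=\psi(\pi_2)^{-1}$: it uses full foldedness in an essential way, since for a merely partially folded 2-automaton the two images would only be \emph{candidates} for identification, and the restricted map $\psi$ could fail to descend to the reduced diagram. Everything else in the argument is a routine unwinding of the definitions of morphism, immersion, and folding.
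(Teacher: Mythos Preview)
Your proof is correct and follows essentially the same approach as the paper's: the key step in both is showing that acceptance by the fully folded $\La$ is preserved under dipole removal, via the observation that the images of the two dipole cells must coincide (up to inversion) in $\La$ because no further folding is possible. The paper treats your first phase as implicit in the construction of $\La'$ and the folding process, while you spell it out a bit more, but the substance is the same.
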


\proof Suppose that cells $\pi_1$ and $\pi_2\iv$ form a dipole in a diagram $\Delta$ accepted by $\La$, $p_1=\topp(\pi_1)$, $q_1=\bott(\pi_1)=\bott(\pi_2)$ , $p_2=\topp(\pi_2)$, and
$\Delta'$ is obtained from $\Delta$ by removing this dipole. We claim that then $\Delta'$ is also accepted by $\La$. This will immediately imply the statement of the lemma. To prove the claim let $\phi$ be a morphism from $\Delta$ to $\La$ which takes both $\topp(\Delta)$ and $\bott(\Delta)$ to the input/output 1-path of $\La$. Then the images of the cells $\pi_1$, $\pi_2$ are cells in $\La$ that have the same images under the immersion of $\La$ into $\kk$  and share a common top or  bottom path. Hence these two cells must be the same in $\La$. Therefore $\phi(p_1)=\phi(p_2)$. Hence $\phi$ induces a morphism $\bar\phi$ from $\Delta'$ into $\La$. The morphism $\bar\phi$ coincides with $\phi$ on $\topp(\Delta)$ and $\bott(\Delta)$. Hence $\La$ accepts $\Delta'$.\endproof

\begin{Remark} Suppose $\Delta_1',...,\Delta_m'$ is another generating set of $H=\la \Delta_1,...,\Delta_n\ra$ and all diagrams $\Delta_i'$ are reduced. Let $\La'$ be the 2-automaton corresponding to $\Delta_1',...,\Delta_m'$. It is easy to see that since $\Delta_i$ is accepted by $\La'$ and $\Delta_j'$ is accepted by $\La$ for every $i,j$, we have $\La'=\La$. Thus $\La$ does not depend on the choice of generating set of the subgroup $H$ (we can even use any infinite generating set, the resulting 2-automaton will be the same). Thus $\La$ can be called the {\em Stallings 2-core} of the subgroup $H$.
\end{Remark}

\begin{Remark} In general the 2-automaton $\La$ constructed above can also accept diagrams not from the subgroup $H=\la \Delta_1,\ldots, \Delta_n\ra$ but in many cases it accepts only the diagrams from $H$. In that case we call $H$ {\em closed} (see Section \ref{5}).
%
\end{Remark}

Lemma \ref{l:au} can be used to construct many proper subgroups of $F$ that 1) do  not fix a point in $(0,1)$ and 2) whose image in $F/[F,F]$ is the
whole $F/[F,F]$ (and hence prove existence of examples solving Savchuk's problem). We illustrate this by the following

\begin{Lemma} The subgroup $H=\la x_0, x_1x_2x_1\iv\ra$ is a proper subgroup of $F$ and satisfies the two properties 1) and 2) above.
\end{Lemma}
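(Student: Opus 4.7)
Plan. I would first dispose of the two properties (1) and (2) explicitly asserted in the conclusion, and then spend the bulk of the effort on proving $H\neq F$ via the Stallings $2$-core construction developed in Subsection \ref{ss:imp}. For (2), the abelianization $F/[F,F]\cong \Z^2$ is generated by the images $\bar x_0,\bar x_1$; since $x_2=x_0^{-1}x_1x_0$, the image of the generator $x_1x_2x_1^{-1}$ equals $\bar x_2=\bar x_1$, so together with $\bar x_0$ the images of the two generators of $H$ surject onto $F/[F,F]$. For (1), reading off the piecewise-linear formula for $x_0$ shows its fixed point set in $[0,1]$ is exactly $\{0,1\}$; since $x_0\in H$, no point of $(0,1)$ can be fixed by every element of $H$.

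To prove $H\neq F$ I apply the Stallings $2$-core machinery. Let $\Delta_0$ and $\Delta$ be the reduced diagrams over the Dunce hat $\kk=\la x\mid x\to x^2\ra$ corresponding to $x_0$ and to $x_1x_2x_1^{-1}$ respectively; both are spherical with top and bottom equal to the single edge $x$. Identify the top and bottom boundary of each with a common distinguished edge, forming the bouquet-of-$2$-spheres $2$-automaton $\Lambda'$, and then perform all possible cell foldings. By Lemma \ref{l:88} the resulting $2$-automaton $\Lambda$ is well-defined, and by Lemma \ref{l:au} every reduced diagram representing an element of $H$ is accepted by $\Lambda$. Therefore, to conclude that $H$ is proper, it suffices to exhibit some $g\in F$ whose reduced diagram does \emph{not} admit a morphism into $\Lambda$. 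Because $F=\la x_0,x_1\ra$, the natural candidate is $g=x_1$: showing $x_1\notin H$ immediately yields $H\neq F$.

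The main obstacle is the explicit combinatorial bookkeeping involved in computing $\Lambda$ and verifying non-acceptance. Concretely, one lists the (finitely many) cells of $\Delta_0$ and $\Delta$, each labeled by the unique positive $2$-cell $x\to x^2$ of $\kk$; because every cell has the same image under the immersion to $\kk$, two cells are eligible to be folded precisely when they share a top (respectively bottom) $1$-path in $\Lambda'$. One then runs the folding process to termination, keeping track of how vertices and edges of $\Lambda'$ get identified. The confluence statement in Lemma \ref{l:88} guarantees the order of folds is irrelevant. Finally, one attempts to build a morphism $\phi$ from the two-cell diagram of $x_1$ into $\Lambda$ starting at the input/output edge: at some $1$-path in $\Lambda$ one shows that the positive cell required by $\phi$ simply does not exist, producing the desired obstruction. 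Once that local obstruction is exhibited, the proof is complete, and by the discussion preceding the lemma, $H$ is contained in a maximal subgroup of infinite index of $F$ that fixes no point of $(0,1)$.
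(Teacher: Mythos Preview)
Your plan is exactly the paper's approach: dispose of (1) via the fixed-point set of $x_0$, dispose of (2) via the abelianization, and then show $x_1\notin H$ by building the Stallings $2$-core $\Lambda$ for $\{x_0,\,x_1x_2x_1^{-1}\}$ and exhibiting an obstruction to a morphism from the diagram of $x_1$ into $\Lambda$.

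The only substantive shortfall is that you describe the folding computation and the obstruction abstractly without carrying them out; this explicit bookkeeping is precisely where the content of the proof lives, and until it is done you have not actually proved $x_1\notin H$. The paper labels the $20$ edges and $12$ cells of the two diagrams, performs the foldings explicitly (arriving at a $2$-automaton with cells $\gamma_1,\gamma_2,\gamma_3,\gamma_7,\gamma_8$ and edges $e_1,e_2,e_4,e_5,e_{12},e_{13}$), and then traces a putative morphism from the diagram of $x_1$ cell by cell until it fails: the image of $\delta_4$ would have to be a positive cell with bottom path $e_{13}e_5$, and no such cell exists in $\Lambda$. One small slip: the reduced diagram of $x_1$ is not a ``two-cell diagram''; it has six cells (three positive and three negative).
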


\proof  It is easy to check that $x_0$ does not fix a point in $(0,1)$ which implies 1). The image of $x_1x_2x_1\iv$ in $F/[F,F]$ coincides with the image of $x_1$. Hence $H[F,F]=F$ which implies 2). It remains to prove that $H$ is a proper subgroup of $F$. We shall prove that $x_1\not\in H$.

Let us denote the positive cell of the Dunce hat $\la x\mid x\to x^2\ra$ by $\pi$. The diagrams  for $x_0$ and $x_1x_2x_1\iv$ viewed as 2-automata are in Figure \ref{f:xxxx} below (the immersion to the Dunce hat maps all positive cells to $\pi$, and all edges to the only edge of the Dunce hat).

\begin{figure}[ht]
\begin{center}
\unitlength .7mm 
\linethickness{0.4pt}
\ifx\plotpoint\undefined\newsavebox{\plotpoint}\fi 
\begin{picture}(181.291,100)(0,0)
\put(21.75,43.25){\line(1,0){63.5}}
\qbezier(21.75,43.25)(56.25,101.375)(84.75,43)
\qbezier(22,43.5)(48,71.625)(66,43.25)
\qbezier(22,43.25)(57,-15.75)(85,43.25)
\qbezier(44.5,43.25)(62.875,6.25)(84.75,43.25)
\put(22,43.25){\circle*{1.581}}
\put(95.5,42.75){\circle*{1.581}}
\put(105.75,43){\circle*{1.581}}
\put(117,43){\circle*{1.581}}
\put(139.75,42.75){\circle*{1.581}}
\put(163.5,42.5){\circle*{1.581}}
\put(180.5,42.75){\circle*{1.581}}
\put(45,43.25){\circle*{1.581}}
\put(65.75,43.5){\circle*{1.581}}
\put(84.75,43.75){\circle*{1.581}}
\put(59.75,60.5){\makebox(0,0)[cc]{$\gamma_1$}}
\put(44.75,49.5){\makebox(0,0)[cc]{$\gamma_2$}}
\put(64,34.75){\makebox(0,0)[cc]{$\gamma_3$}}
\put(40.75,29.5){\makebox(0,0)[cc]{$\gamma_4$}}
\put(54.75,75){\makebox(0,0)[cc]{$e_1$}}
\put(44,60.5){\makebox(0,0)[cc]{$e_2$}}
\put(32.5,46.25){\makebox(0,0)[cc]{$e_3$}}
\put(55,45.75){\makebox(0,0)[cc]{$e_4$}}
\put(75,46){\makebox(0,0)[cc]{$e_5$}}
\put(61,23){\makebox(0,0)[cc]{$e_6$}}
\put(53.75,11.25){\makebox(0,0)[cc]{$e_7$}}
\put(94.5,43){\line(1,0){86.5}}
\qbezier(94.75,43)(132.375,140)(180.5,43)
\qbezier(180.5,43)(146.5,-41.625)(94.5,43.25)
\qbezier(105,42.75)(132.25,113.25)(180.5,42.75)
\qbezier(105.25,43)(136.5,86.5)(163.75,43)
\qbezier(116.75,43)(134.75,69.375)(163.75,42.25)
\qbezier(105.5,43)(145.875,-17.5)(179.75,43)
\qbezier(105.75,43)(130.875,12)(139.5,43)
\qbezier(139.5,43)(158.375,13.5)(179.75,43)
\put(132,83.75){\makebox(0,0)[cc]{$\gamma_5$}}
\put(135.5,70.75){\makebox(0,0)[cc]{$\gamma_6$}}
\put(130,58.5){\makebox(0,0)[cc]{$\gamma_7$}}
\put(135.75,48.25){\makebox(0,0)[cc]{$\gamma_8$}}
\put(159.75,36.25){\makebox(0,0)[cc]{$\gamma_9$}}
\put(124.75,36.5){\makebox(0,0)[cc]{$\gamma_{10}$}}
\put(143.25,23.5){\makebox(0,0)[cc]{$\gamma_{11}$}}
\put(136.5,6.5){\makebox(0,0)[cc]{$\gamma_{12}$}}
\put(138,94.5){\makebox(0,0)[cc]{$e_8$}}
\put(117.75,72.5){\makebox(0,0)[cc]{$e_9$}}
\put(99,45.5){\makebox(0,0)[cc]{$e_{10}$}}
\put(151.5,61.75){\makebox(0,0)[cc]{$e_{11}$}}
\put(111.75,46){\makebox(0,0)[cc]{$e_{12}$}}
\put(138.5,58.25){\makebox(0,0)[cc]{$e_{13}$}}
\put(127,45.5){\makebox(0,0)[cc]{$e_{14}$}}
\put(148.25,46){\makebox(0,0)[cc]{$e_{15}$}}
\put(169.75,46){\makebox(0,0)[cc]{$e_{16}$}}
\put(124.75,31.25){\makebox(0,0)[cc]{$e_{17}$}}
\put(158.25,30.75){\makebox(0,0)[cc]{$e_{18}$}}
\put(143.75,17){\makebox(0,0)[cc]{$e_{19}$}}
\put(162.5,6.25){\makebox(0,0)[cc]{$e_{20}$}}
\end{picture}
\end{center}
\caption{The diagrams for $x_0$ and $x_1x_2x_1\iv$}
\label{f:xxxx}
\end{figure}
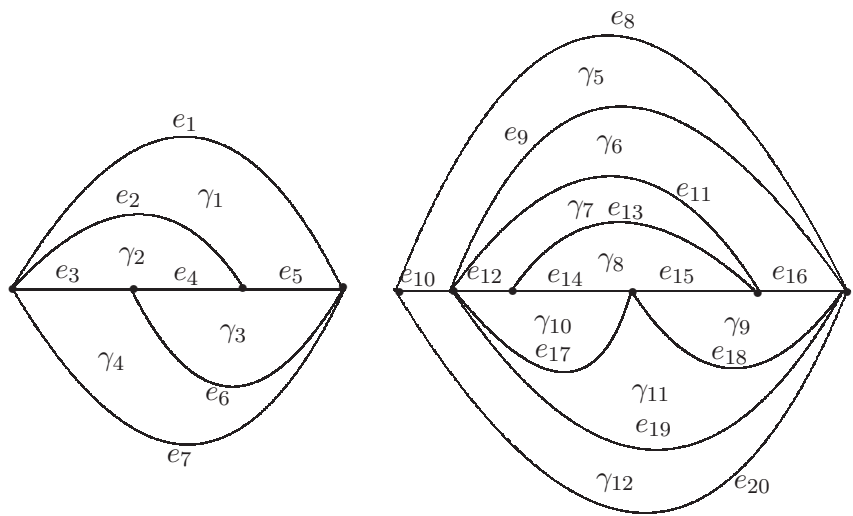

Together the two diagrams have 20 edges (labeled by $e_1,\ldots, e_{20}$) and 12 cells (labeled $\gamma_1, \ldots,  \gamma_{12}$). To construct the 2-automaton $\La$ for these two diagrams, we first need to identify the top and the bottom paths of both diagrams. So we set $e_1=e_7=e_8=e_{20}$. Now the positive cells $\gamma_1, \gamma_4, \gamma_5, \gamma_{12}$ need to be folded because these cells share the top 1-path $e_1$. So we need to identify $\gamma_1=\gamma_4=\gamma_5=\gamma_{12}$ and the edges
$e_2=e_3=e_{10}$ and $e_5=e_6=e_9=e_{19}$. Now the cells $\gamma_3$, $\gamma_6$, $\gamma_{11}$ have common top edge $e_5$. So we need to fold these three cells. Thus  $\gamma_3=\gamma_6=\gamma_{11}$, $e_4=e_{11}=e_{17}$, $e_5=e_{16}=e_{18}$.  Then the cells $\gamma_7$ and $\gamma_{10}$ share the top edge $e_4$. So we set $\gamma_7=\gamma_{10}$, $e_{13}=e_{14}$. Furthermore $\gamma_9$ and $\gamma_3$ now share the top edge $e_5$. So we need to set $e_4=e_{15}$. No more foldings are needed, and the $2$-automaton $\La$ is presented in Figure \ref{f:x1} (there the cells and edges are supposed to be identified according to their labels: all $e_1$ edges are the same, all $\gamma_7$-cells are the same, etc.).

\begin{figure}[ht!]
\begin{center}
\unitlength .6 mm 
\linethickness{0.4pt}
\ifx\plotpoint\undefined\newsavebox{\plotpoint}\fi 
\begin{picture}(181.291, 90)(0,0)
\put(21.75,43.25){\line(1,0){63.5}}
\qbezier(21.75,43.25)(56.25,101.375)(84.75,43)
\qbezier(22,43.5)(48,71.625)(66,43.25)
\qbezier(22,43.25)(57,-15.75)(85,43.25)
\qbezier(44.5,43.25)(62.875,6.25)(84.75,43.25)
\put(22,43.25){\circle*{1.581}}
\put(95.5,42.75){\circle*{1.581}}
\put(105.75,43){\circle*{1.581}}
\put(117,43){\circle*{1.581}}
\put(139.75,42.75){\circle*{1.581}}
\put(163.5,42.5){\circle*{1.581}}
\put(180.5,42.75){\circle*{1.581}}
\put(45,43.25){\circle*{1.581}}
\put(65.75,43.5){\circle*{1.581}}
\put(84.75,43.75){\circle*{1.581}}\put(59.75,60.5){\makebox(0,0)[cc]{$\gamma_1$}}
\put(44.75,49.5){\makebox(0,0)[cc]{$\gamma_2$}}
\put(64,34.75){\makebox(0,0)[cc]{$\gamma_3$}}
\put(40.75,29.5){\makebox(0,0)[cc]{$\gamma_1$}}
\put(54.75,75){\makebox(0,0)[cc]{$e_1$}}
\put(44,60.5){\makebox(0,0)[cc]{$e_2$}}
\put(32.5,46.25){\makebox(0,0)[cc]{$e_2$}}
\put(55,45.75){\makebox(0,0)[cc]{$e_4$}}
\put(75,46){\makebox(0,0)[cc]{$e_5$}}
\put(61,23){\makebox(0,0)[cc]{$e_5$}}
\put(53.75,11.25){\makebox(0,0)[cc]{$e_1$}}
\put(94.5,43){\line(1,0){86.5}}
\qbezier(94.75,43)(132.375,140)(180.5,43)
\qbezier(180.5,43)(146.5,-41.625)(94.5,43.25)
\qbezier(105,42.75)(132.25,113.25)(180.5,42.75)
\qbezier(105.25,43)(136.5,86.5)(163.75,43)
\qbezier(116.75,43)(134.75,69.375)(163.75,42.25)
\qbezier(105.5,43)(145.875,-17.5)(179.75,43)
\qbezier(105.75,43)(130.875,12)(139.5,43)
\qbezier(139.5,43)(158.375,13.5)(179.75,43)
\put(132,83.75){\makebox(0,0)[cc]{$\gamma_1$}}
\put(135.5,70.75){\makebox(0,0)[cc]{$\gamma_3$}}
\put(130,58.5){\makebox(0,0)[cc]{$\gamma_7$}}
\put(135.75,48.25){\makebox(0,0)[cc]{$\gamma_8$}}
\put(159.75,36.25){\makebox(0,0)[cc]{$\gamma_3$}}
\put(124.75,36.5){\makebox(0,0)[cc]{$\gamma_7$}}
\put(143.25,23.5){\makebox(0,0)[cc]{$\gamma_3$}}
\put(136.5,6.5){\makebox(0,0)[cc]{$\gamma_1$}}
\put(138,94.5){\makebox(0,0)[cc]{$e_1$}}
\put(117.75,72.5){\makebox(0,0)[cc]{$e_5$}}
\put(99,45.5){\makebox(0,0)[cc]{$e_2$}}
\put(151.5,61.75){\makebox(0,0)[cc]{$e_{4}$}}
\put(111.75,46){\makebox(0,0)[cc]{$e_{12}$}}
\put(138.5,58.25){\makebox(0,0)[cc]{$e_{13}$}}
\put(127,45.5){\makebox(0,0)[cc]{$e_{13}$}}
\put(148.25,46){\makebox(0,0)[cc]{$e_{4}$}}\put(169.75,46){\makebox(0,0)[cc]{$e_{5}$}}
\put(124.75,31.25){\makebox(0,0)[cc]{$e_{4}$}}
\put(158.25,30.75){\makebox(0,0)[cc]{$e_{5}$}}
\put(143.75,17){\makebox(0,0)[cc]{$e_5$}}
\put(162.5,6.25){\makebox(0,0)[cc]{$e_{1}$}}
\end{picture}
\end{center}
\caption{The 2-core for $\la x_0, x_1x_2x_1\iv\ra$}
\label{f:x1}
\end{figure}

\nopagebreak[1000]

\begin{figure}[ht!]
\begin{center}
\unitlength .8mm 
\linethickness{0.4pt}
\ifx\plotpoint\undefined\newsavebox{\plotpoint}\fi 
\begin{picture}(120.791,100)(0,0)
\put(33.5,46.75){\line(1,0){86.75}}
\qbezier(33.5,47)(76.125,117)(120.25,47)
\qbezier(120.25,47)(77,-31.25)(33.75,46.5)
\qbezier(45.75,46.75)(77.75,94.125)(119.75,47)
\qbezier(119.75,47)(82.625,-.375)(46,46.75)
\qbezier(46,46.75)(76.375,73.625)(101.25,47)
\qbezier(74.75,46.5)(95.25,22.875)(118.75,46.75)
\put(33.5,46.75){\circle*{1.581}}
\put(46.75,46.75){\circle*{1.581}}
\put(74.75,46.75){\circle*{1.581}}
\put(101.25,46.5){\circle*{1.581}}
\put(120,46.75){\circle*{1.581}}
\put(77.5,85.25){\makebox(0,0)[cc]{$f_1$}}
\put(78.75,73.25){\makebox(0,0)[cc]{$f_3$}}
\put(40.5,49.75){\makebox(0,0)[cc]{$f_2$}}\put(82.75,62){\makebox(0,0)[cc]{$f_4$}}
\put(109,49.5){\makebox(0,0)[cc]{$f_5$}}
\put(61.5,50){\makebox(0,0)[cc]{$f_6$}}
\put(86.5,49.75){\makebox(0,0)[cc]{$f_7$}}
\put(99,38.5){\makebox(0,0)[cc]{$f_8$}}
\put(79.25,27){\makebox(0,0)[cc]{$f_9$}}
\put(75.75,10.5){\makebox(0,0)[cc]{$f_{10}$}}
\put(55.75,67.5){\makebox(0,0)[cc]{$\delta_1$}}
\put(70.5,64.75){\makebox(0,0)[cc]{$\delta_2$}}
\put(73.5,52.5){\makebox(0,0)[cc]{$\delta_3$}}
\put(88.75,41.75){\makebox(0,0)[cc]{$\delta_4$}}
\put(63.5,38.75){\makebox(0,0)[cc]{$\delta_5$}}
\put(57.25,22.75){\makebox(0,0)[cc]{$\delta_6$}}
\end{picture}
\end{center}
\caption{The $2$-automaton for $x_1$}
\label{f:x23}
\end{figure}
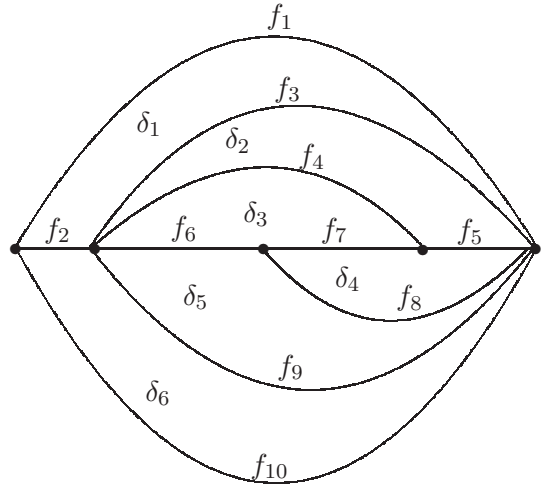

Now suppose that $x_1$ is accepted by $\La$. The diagram $\Delta$ for $x_1$ with labels of edges and cells is in Figure \ref{f:x23}. We should have a morphism $\psi$ from $\Delta$ to $\La$ sending $f_1$ and $f_{10}$ to $e_1$. Then $\psi(\delta_1)=\psi(\delta_6)=\gamma_1$ since $\La$ has only one cell with top edge $e_1$. This forces $\psi(f_2)=e_2$, $\psi(f_3)=\psi(f_9)=e_5$. Since $\La$ has only one positive cell with top edge $e_5$, we should have $\psi(\delta_2)=\gamma_3$. That means $\psi(f_4)=e_4, \psi(f_5)=e_5$. Again $\La$ has only one positive cell with top edge $e_4$. Therefore $\psi(\delta_3)=\gamma_7$, hence $\psi(f_6)=e_{12}, \psi(f_7)=e_{13}$. Now $\psi$ must map the positive cell $\delta_4$ to a cell with bottom edges $\psi(f_7)=e_{13}$ and $\psi(f_5)=e_5$. But $\La$ does not have such a cell, a contradiction. This contradiction shows, by Lemma \ref{l:au}, that $H$ does not contain $x_1$ and hence is a proper subgroup of $F$.
\endproof

\begin{Remark} One more way to show that $H=\la x_0, x_1x_2x_1\iv\ra$ is a proper subgroup of $F$ is to prove that $H$ does not act transitively on the set of finite binary fractions from $[0,1]$. That can be also done using the 2-core of $H$ (see \cite{Golan16}). One can ask whether a maximal subgroup of $F$ of infinite index can act transitively. The answer is ``yes", see \cite{Golan16}. That gives a strong negative answer to Savchuk's question.
\end{Remark}

\section{$F$ is quasi-residually finite}

Let $G$ be a finitely generated group. Recall that a subgroup $H < G$ is of \emph{quasi-finite} index if the interval $[H,G]$ in the lattice of subgroups of $G$ is finite, and  $G$ is \emph{quasi-residually finite} if it has a decreasing sequence of finitely generated subgroups $G>H_1>H_2...$ of quasi-finite index in $G$ with $\cap H_i=\{1\}$. In this section we prove

\begin{Theorem} \label{t:2} The R. Thompson group $F$ is quasi-residually finite.
\end{Theorem}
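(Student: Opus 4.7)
The plan is to take $H_n := H_{U_n}$ where $U_n$ is an increasing nested sequence of finite sets of finite binary fractions whose union is the set of all finite binary fractions in $(0,1)$; concretely, set $U_n = \{k/2^n : 1 \le k \le 2^n-1\}$. Then $H_1 \supset H_2 \supset \cdots$ is a decreasing chain of subgroups, and I must verify three things: (i) $\bigcap_n H_n = \{1\}$, (ii) each $H_n$ is finitely generated, and (iii) each $H_n$ is of quasi-finite index in $F$.

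For (i), any $g \in \bigcap_n H_n$ fixes every finite binary fraction in $(0,1)$, and elements of $F$ are determined by their action on finite binary fractions, so $g = 1$. For (ii), given a finite set $U = \{\alpha_1 < \cdots < \alpha_k\}$ of finite binary fractions, the pointwise stabilizer decomposes as a direct product $H_U \cong \prod_{i=0}^k F_i$, where $F_i$ is the copy of $F$ consisting of elements of $F$ supported on $[\alpha_i,\alpha_{i+1}]$ (with $\alpha_0 = 0$, $\alpha_{k+1}=1$); the isomorphism uses the fact that the $\alpha_i$ being finite binary fractions allows piecewise-linear $F$-maps with dyadic breakpoints on each subinterval. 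Each factor $F_i \cong F$ is $2$-generated, so $H_{U_n}$ is finitely generated.

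The main work is (iii). I claim the stronger statement: for any finite set $U$ of finite binary fractions, the interval $[H_U, F]$ in the lattice of subgroups of $F$ consists exactly of the subgroups $H_V$ with $V \subseteq U$; in particular it has cardinality $2^{|U|}$. Clearly each $H_V$ with $V \subseteq U$ lies in this interval. Conversely, given $K$ with $H_U \subseteq K \subseteq F$, let
\[
V \;=\; \{\alpha \in U : g(\alpha) = \alpha \text{ for all } g \in K\}.
\]
Then $H_U \subseteq K \subseteq H_V$ and I must show $K = H_V$. Since $F$ preserves the order on $(0,1)$, no $g \in F$ can permute $U$ nontrivially; hence for each $\alpha \in U \setminus V$ there is some $g \in K$ with $g(\alpha) \notin U$. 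Now decompose $[0,1]$ along $V \cup \{0,1\}$ into subintervals $J_1, \ldots, J_m$. Using the product decomposition $H_V \cong \prod_j F_j$ from step (ii), the subgroup $K \subseteq H_V$ projects to subgroups $K_j$ in each $F_j$, with $K \supseteq \prod_j (K_j \cap H_{U \cap J_j})$.

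Within each subinterval $J_j$, the projection $K_j$ is a subgroup of (a rescaled copy of) $F$ containing the pointwise stabilizer of $U \cap J_j$, and by construction of $V$, no point of $U \cap J_j$ is fixed by all of $K_j$. If $|U \cap J_j| = 1$, then Savchuk's theorem (that $H_{\{\beta\}}$ is maximal in $F$ for every $\beta \in (0,1)$) applied inside $J_j$ forces $K_j = F_j$. The case $|U \cap J_j| \ge 2$ is handled by induction on $|U|$: the interval $[H_{U \cap J_j}, F_j]$ is finite, and the fact that $K_j$ fixes no point of $U \cap J_j$ together with the order-preserving constraint rules out all proper subgroups of $F_j$ containing $H_{U \cap J_j}$ except $F_j$ itself. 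Thus $K_j = F_j$ for every $j$, hence $K = \prod_j F_j = H_V$, as desired.

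The main obstacle is the inductive step in (iii), where one must upgrade Savchuk's maximality result to show that the only subgroup of $F_j$ containing $H_{U \cap J_j}$ with no common fixed point in $U \cap J_j$ is $F_j$ itself; the key ingredients are the order-preservation of $F$ (which prevents permutations of $U \cap J_j$), the transitivity of $F$ on finite binary fractions (which lets one move images of $\alpha$ back using $H_{U\cap J_j}$ combined with a single displacing element), and Savchuk's theorem as the base of the induction. Combining (i)--(iii), $H_1 > H_2 > \cdots$ is a decreasing chain of finitely generated subgroups of quasi-finite index in $F$ with trivial intersection, proving Theorem \ref{t:2}.
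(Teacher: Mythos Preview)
Your framework for (i) and (ii) matches the paper's exactly, and your target in (iii) --- that $[H_U,F]$ is precisely $\{H_W:W\subseteq U\}$ --- is also the paper's Claim~1. The difficulty is in your execution of (iii), where there are two genuine gaps.

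First, your induction on $|U|$ does not reduce when $V=\emptyset$. In that case there is a single interval $J_1=[0,1]$, so $K_1=K$ and $U\cap J_1=U$; you are then asking the induction hypothesis to prove exactly the statement you are trying to establish. This is precisely the hardest case (the subgroup $K$ has no common fixed point in $U$), and your sketch does not address it. Second, even when $V\ne\emptyset$, the conclusion $K_j=F_j$ concerns only the \emph{projections} of $K$ to the factors of $H_V=\prod_j F_j$; it does not by itself yield $K\supseteq F_j$ embedded as the $j$th factor, which is what you need for $K=H_V$. (This second gap can be repaired: $K\cap F_j$ is normal in $F_j$ by a Goursat argument, hence contains $[F_j,F_j]$ since every nontrivial normal subgroup of $F$ does, and then $H_j\cdot[F_j,F_j]=F_j$ because $H_j$ surjects onto the abelianization. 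But you have not supplied this.) Incidentally, your order-preservation remark does not justify the claim that some $g\in K$ sends $\alpha$ outside $U$; one needs powers of $g$, as in the paper.

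The paper sidesteps both issues by proving a sharper local statement (its Claim~2): for any $g\notin H_U$, already $\langle H_U,g\rangle\supseteq H_{U\setminus\{u_i\}}$ for some $u_i\in U$. The construction is to take a power $g^k$ pushing the $g$-orbit of $U$ off of $U$, choose $h\in H_U$ fixing $g^k(U\setminus\{u_i\})$ but moving $g^k(u_i)$, and set $g_1=g^khg^{-k}\in H_{U\setminus\{u_i\}}\setminus H_U$; then transitivity of $H_U$ on dyadics in $(u_{i-1},u_i)$ gives $\langle H_U,g_1\rangle=H_{U\setminus\{u_i\}}$. This removes one point at a time, so the induction on $|U|$ genuinely decreases and no product/projection bookkeeping is needed.
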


\proof Let $U$ be a finite set of finite binary fractions in $(0,1)$ 
 and $H_U$ be the stabilizer of this set in $F$. It is clear that if $U_1\subset U_2\subset U_3...$ and $\cup U_i$ is the set of all finite binary fractions in $(0,1)$,
 then $F>H_{U_1}>H_{U_2}...$ and $\cap H_{U_i}=\{1\}$. It is also clear that each $H_U$ is isomorphic to the direct product of $|U|+1$ copies of $F$, hence is finitely generated. It remains to show that $H_U$ is of quasi-finite index in $F$. In fact we will prove more:

{\bf Claim 1.} The lattice $L_1$ of subgroups of $F$ containing $H_U$ is anti-isomorphic to the Boolean lattice $L_2$ of subsets of $U$, and the map $V\mapsto H_V$ for $V\subseteq U$ is an anti-isomorphism from $L_2$ to $L_1$.

We shall prove

{\bf Claim 2.} Let $g\in F\setminus H_U$. Then the subgroup $\la H_U, g\ra$ contains one of the subgroups $H_V$ where $V$ is a proper subset of $U$.

Note that Claim 2 implies Claim 1. Indeed, every subgroup $G$ of $F$ containing $H_U$ is generated by $H_U$ and a sequence of elements $g_1,g_2,\ldots$ such that $g_1\not\in H_U$ and for every $i$, $g_{i+1}\not\in \la H_U, g_1,\ldots, g_i\ra$. By Claim 2, $G$ is equal to $\la H_V, g_1,g_2,\ldots \ra$ where $V$ is a proper subset of $U$ and by induction on the size $|U|$ the subgroup $G$ is equal to $H_W$ for some $W\subseteq U$. Now it is obvious that if $V\ne W$, then $H_V\ne H_W$ and that $V\subseteq W$ if and only if $H_W\le H_V$. Hence the map $V\mapsto H_V$ from the lattice of subsets of $U$ to the lattice of subgroups of $F$ containing $H_U$ is a lattice anti-isomorphism.

Let us prove Claim 2. Let $u_1,...,u_n$ be the elements of $U$ in increasing order. Let us denote $u_0=0$. Since $g\not\in H_U$, it does not fix one of the numbers $u_j, j\ge 1$. Let $u_i$ be the smallest number in $U$ not fixed by $g$. Let $V=U\setminus \{u_i\}$. It is enough to show that $H_V\subseteq \la H_U, g\ra$. To do so, we will prove first that $\la H_U,g\ra$ contains some element of $H_V\setminus H_U$.

Let $U_g\subseteq U$ be the subset of elements of $U$ which are not fixed by $g$. In particular $u_i\in U_g$. Since for any $u\in U_g$, the elements in the orbit $g^n(u)$ for $n\in\mathbb{N}$ are all distinct, there is a power $k\in\mathbb{N}$ for which $g^k(U_g)\cap U=\emptyset$. By \cite[Lemma 4.2]{CFP} there is a function $h\in F$ which fixes all the points in $U\cup g^k(U_g\setminus\{u_i\})$ and does not fix $g^k(u_i)$. In particular $h\in H_U$.
Let $g_1=g^khg^{-k}$. Then $g_1$ fixes all $u\in U\setminus\{u_i\}$ (recall that composition in $F$ is from left to right). By construction, $g_1$ does not fix $u_i$. Therefore $g_1\in  \la H_U,g\ra\cap (H_V\setminus H_U)$.

To finish the proof, we will show that $\la H_U,g_1\ra=H_V$. Since $H_U$ and $g_1$ are contained in $H_V$ one inclusion is trivial. For the other direction, let $f\in H_V$. We can assume that $f$ does not belong to $H_U$, so in particular $f(u_i)\neq u_i$. Since $f$ is increasing, either $f(u_i)<u_i$ or $f\iv (u_i)<u_i$. Without loss of generality let $f(u_i)<u_i$. Similarly we can assume that $g_1(u_i)<u_i$. Lemma 4.2 of \cite{CFP} implies that the group $H_U$ acts transitively on the set of finite binary fractions from the interval $(u_{i-1}, u_i)$. Therefore there exists an element $h_1\in H_U$ such that $fh_1(u_i)=g_1(u_i)$. Therefore $p=fh_1g_1^{-1}$ fixes $u_i$. Since $p$ also belongs to $H_V$, we get that $p\in H_U$. Therefore $f=pg_1h_1\iv\in\la H_U,g_1\ra$.
\endproof

\begin{Remark} It is easy to see that the proof of Theorem \ref{t:2} can be adapted for every group $G$ of order preserving maps $Q\to Q$ where $Q$ is a dense subset of $\R$ provided $G$ is \emph{locally transitive} meaning that for every open interval $(a,b)$ in $\R$ the subgroup of all elements of $G$ fixing $Q\setminus (a,b)$ pointwise acts transitively on $(a,b)\cap Q$. Thus every such group $G$ is quasi-residually finite. That includes the group $\mathrm{Homeo}^+(\R)$ of all increasing homeomorphisms of $\R$.
\end{Remark}

Certainly not every finitely generated group is quasi-residually finite. The simplest (to explain) example is a torsion Tarski monster where every proper subgroup is finite and cyclic \cite{Olsh}. More generally every infinite group with the descending chain condition on subgroups is not quasi-residually finite.

For every non-residually finite group, it is interesting to know if it is quasi-residually finite. The next problem lists some of the most famous non-residually finite groups.

\begin{Problem} Is any of these groups quasi-residually finite:
\begin{itemize}
\item The free Burnside group $B_{m,n}$ for all sufficiently large exponents $n$ and rank $m\ge 2$,
\item The Higman group $H=\la a,b,c,d\mid b\iv ab=a^2, c\iv bc =b^2, d\iv cd =c^2, a\iv da =d^2\ra$,
\item The Baumslag-Solitar group $BS(2,3)=\la a,b\mid a^3=b\iv a^2b\ra$,
\item The Baumslag group $B=\la a,b,c\mid c\iv ac =b, b\iv ab =a^2\ra$,
\item The R. Thompson groups $T$ and $V$?
\end{itemize}
\end{Problem}

\begin{Remark} Recently Jordan Nikkel showed (unpublished) that the R. Thompson groups $T$ and $V$ are quasi-residually finite. \end{Remark}

By Theorems \ref{main} and Claim 1 in the proof of Theorem \ref{t:2}, if $H$ is Jones' subgroup $\arr F$ or one of Savchuk's subgroups $H_U$ for a finite set $U$ of binary fractions, then the interval $[H,F]$ in the lattice of subgroups of $F$ is  modular (in fact, distributive): the three-element chain in the first case and a Boolean lattice in the second case. If $H$ is a finite index subgroup of $F$, then it is normal in $F$ and $F/H$ is Abelian because $H$ contains the derived subgroup of $F$. Hence the lattice $[H,F]$ is also modular.

\begin{Problem} Is there a subgroup $H$ of $F$ of quasi-finite index such that the interval $[H,F]$ is not a modular lattice?
\end{Problem}

Finally notice that there are finitely generated and infinitely generated maximal subgroups of $F$ (say, $H_{\{\alpha\}}$ is finitely generated if $\alpha$ is a finite binary fraction, and infinitely generated if $\alpha$  is irrational \cite{Sav}). Since $F$ is finitely generated, every proper subgroup of $F$ is contained in a maximal subgroup of $F$.

\begin{Problem} Is it true that every finitely generated proper subgroup of $F$ is contained in a finitely generated maximal subgroup of $F$?
\end{Problem}

We cannot finish the paper about $F$ without asking an amenability question.

\begin{Problem} Is the Schreier graph of Jones' subgroup $\arr F$ amenable?
\end{Problem}

Note that if the answer is ``no'', then $F$ itself is not amenable.

\section{Some further results and open problems about subgroups of $F$}\label{5}

Since the paper was submitted several new results about subgroups of the Thompson group $F$ have been obtained by the authors. Here we list some of these results.

\subsection{Stabilizers of finite sets of points}  As noted above, if a finite set $U\subset (0,1)$ consists of finite binary fractions, then the subgroup $H_U$, the stabilizer of $U$ in $F$,  is isomorphic to the direct product of $|U|+1$ copies of $F$. It is mentioned in \cite{Sav1}, that if $U=\{\alpha\}$ where $\alpha$ is irrational, then $H_U$ is not finitely generated. In \cite{GS16} we clarify the algebraic structure of $F_U$ for arbitrary finite $U$. For every such $U=\{\alpha_1,\alpha_2,...,\alpha_n\}$ where $\alpha_1 < \alpha_2 <...< \alpha_n$ we  define its {\em type} $\tau(U)$, as the word of length $n$ in the alphabet $\{1,2,3\}$ as follows: for every $i$, the $i$th letter in $\tau(U)$ is $1$ if $\alpha_i$ is a finite binary fraction, $2$ if $\alpha_i$ is a rational but not a finite binary fraction, $3$ if $\alpha_i$ is irrational.

\begin{theorem}\label{th:a} (1) If $\tau(U)=\tau(V)$ then $H_U$ and $H_V$ are isomorphic.

(2) $H_U$ is finitely generated if and only if $U$ does not contain irrational numbers. In that case the minimal number of generators of $H_U$ is
$2k+m+2$ where $k$ is the number of $1$s in $\tau(U)$ and $m$ is the number of $2$s in $\tau(U)$.
\end{theorem}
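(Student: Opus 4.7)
\noindent\textbf{Proof proposal for Theorem \ref{th:a}.}

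The plan is to analyze $H_U$ locally at each $\alpha\in U$ via a ``germ'' homomorphism, and globally on the arcs between consecutive points of $U\cup\{0,1\}$. The key point is that the local contribution at $\alpha$ depends only on the type of $\alpha$, and the isomorphism type of $H_U$ is assembled from these local pieces in a way governed only by $\tau(U)$.

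\smallskip

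\noindent\emph{Local analysis (setup for both parts).} Write $U=\{u_1<\cdots<u_n\}$, and for each $u_i\in U$ consider its germ data in $F$. If $u_i$ is \emph{type $1$} (dyadic), then in a dyadic neighborhood of $u_i$ any element of $H_U$ is determined independently on the two sides (the left and right slopes at $u_i$ are independent powers of $2$), so the germ map at $u_i$ is a surjection $H_U\to\Z^2$. If $u_i$ is \emph{type $2$} with binary expansion $.w\overline{v}$, then an $f\in H_U$ near $u_i$ must rewrite $w\overline{v}$ as another periodic word with the same value, which forces the same ``shift by periods'' $j\in\Z$ on both sides; the germ map at $u_i$ is a surjection $H_U\to\Z$. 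If $u_i$ is \emph{type $3$} (irrational) then $f$ cannot have a breakpoint at $u_i$ and must fix it with two-sided dyadic slopes; the usual shrinking-neighborhood argument forces $f$ to be the identity on a neighborhood of $u_i$, so the germ at $u_i$ is trivial.

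\smallskip

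\noindent\emph{Part (1).} I would assemble $H_U$ from the arc subgroups $F_i=\{f\in H_U:f$ is the identity outside $(u_{i-1},u_{i+1})$ and fixes $u_i\}$, which depends on the types of $u_{i-1},u_i,u_{i+1}$, together with the germ homomorphisms. For each arc $[u_i,u_{i+1}]$ between consecutive points of $U\cup\{0,1\}$, the subgroup of $H_U$ supported in that arc is naturally isomorphic to $F$ (via a dyadic self-similarity if both endpoints are dyadic, and via a restriction to the ``germ-trivial'' subgroup at non-dyadic endpoints). Combining them along the germ constraints produces a presentation whose isomorphism type depends only on the sequence of types, i.e.\ on $\tau(U)$. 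This yields the explicit isomorphism $H_U\cong H_V$ whenever $\tau(U)=\tau(V)$.

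\smallskip

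\noindent\emph{Part (2), non-finite generation.} If some $u_i\in U$ is irrational, the germ analysis shows that every $f\in H_U$ is the identity in a (possibly very small) neighborhood $N_f$ of $u_i$. For any finite set $S\subset H_U$ the intersection $\bigcap_{f\in S}N_f$ is a definite neighborhood $(u_i-\varepsilon,u_i+\varepsilon)$, so no element of $\langle S\rangle$ can be non-trivial on $(u_i-\varepsilon/2,u_i)$. On the other hand $H_U$ contains compactly supported PL homeomorphisms of $(u_{i-1},u_i)$ arbitrarily close to $u_i$, giving elements outside $\langle S\rangle$. Hence $H_U$ is not finitely generated, generalizing the argument of \cite{Sav1}.

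\smallskip

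\noindent\emph{Part (2), the count $2k+m+2$.} Assume $U$ contains no irrationals, so $n=k+m$. For the lower bound I would compute $H_U^{\mathrm{ab}}$. The abelianization of the arc subgroup on $[u_i,u_{i+1}]$ is $\Z^2$ (the two endpoint log-slopes, matching $F^{\mathrm{ab}}=\Z^2$), giving a total of $2(n+1)$ generators of $H_U^{\mathrm{ab}}$ before relations. The germ maps add no new generators but impose one relation per type-$2$ point (the shared left/right shift identifies one $\Z$ on each side); type-$1$ points impose no such relation. Hence $H_U^{\mathrm{ab}}\cong\Z^{2(n+1)-m}=\Z^{2k+m+2}$, forcing any generating set to have at least $2k+m+2$ elements. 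For the upper bound I would exhibit explicit generators: take $2$ generators of $F$ on each of the $k+m+1$ arcs (total $2k+2m+2$), then observe that at each of the $m$ type-$2$ points one of these generators can be replaced by a single ``germ-shift'' element acting on both adjacent arcs, eliminating $m$ generators and yielding $2k+m+2$.

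\smallskip

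\noindent\emph{Main obstacle.} The hardest step is the abelianization computation: one must prove that the only relations imposed by the global PL structure on the direct sum of arc-abelianizations are exactly the $m$ germ-matching relations at type-$2$ points, and in particular that type-$1$ matchings impose no hidden relation and that the ``shift'' classes at a type-$2$ point really are non-trivial in $H_U^{\mathrm{ab}}$. Once this is in hand, both the bound and the explicit generating set follow from the germ analysis performed in the setup.
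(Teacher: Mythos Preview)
The paper does not actually prove Theorem \ref{th:a}; it is quoted from \cite{GS16}, and the only information given here about the method is the sentence following the statement: $H_U$ is described as an \emph{iterated HNN extension} of a direct product of copies of $F$, of $[F,F]$, and of the normal subgroup $L=\{f\in F: f'(0)=1\}$. Your germ analysis (type $1\Rightarrow\Z^2$, type $2\Rightarrow\Z$, type $3\Rightarrow$ trivial) is exactly the right local picture, and your non-finite-generation argument at an irrational point is correct and essentially the standard one. The abelianization route to the lower bound $2k+m+2$ is also viable in principle.

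There is, however, a genuine gap in your global assembly. You assert that ``the subgroup of $H_U$ supported in that arc is naturally isomorphic to $F$''. This is false when an endpoint of the arc is a non-dyadic rational: an element of $F$ supported in $[u_i,u_{i+1}]$ with $u_{i+1}$ non-dyadic must be the identity near $u_{i+1}$, so the arc subgroup you describe is a directed union of copies of $F$ and is \emph{not} isomorphic to $F$ (its abelianization is not $\Z^2$). This is precisely why the paper's description invokes $[F,F]$ and $L$ as building blocks alongside $F$. The error propagates: your Part (1) argument (``combining them along the germ constraints produces a presentation whose isomorphism type depends only on $\tau(U)$'') is not a proof until you name the correct arc groups and the correct amalgamation; the iterated HNN extension in \cite{GS16} is exactly the structure that does this. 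For Part (2), your abelianization count can be rescued, but not through ``$\Z^2$ per arc minus $m$ relations''. The clean statement is: the germ homomorphism $H_U\to\Z^{2}\times(\Z^{2})^{k}\times\Z^{m}$ recording $\log_2 f'(0^+),\log_2 f'(1^-)$, the two one-sided log-slopes at each dyadic $u_i$, and the period shift at each non-dyadic rational $u_i$, is surjective, and its kernel is the direct product over the open arcs of the compactly supported PL groups there, each isomorphic to $[F,F]$ and hence perfect. That yields $H_U^{\mathrm{ab}}\cong\Z^{2k+m+2}$ without the incorrect ``arc $\cong F$'' step. Your upper-bound construction also needs more than a sentence: at a type-$2$ point you must check that a single two-sided ``shift'' element, together with the remaining arc generators, really generates the full local stabilizer; this is where the HNN description earns its keep.
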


In each case, we completely describe $H_U$ as an iterated HNN extension of a direct product of several copies of $F$, several copies of the derived subgroup $[F,F]$ of $F$ and several copies of the normal subgroup $L$ of $F$ of all functions with slope $1$ at $0$. In particular, if $\alpha$ is a rational number which is not a finite binary fraction, then $H_{\{\alpha\}}$ is isomorphic to the ascending HNN extension of $F\times F$ corresponding to the endomorphism $\phi$ defined as follows (it does not depend on $\alpha$ by Theorem \ref{th:a}):

$$\begin{array}{ll} \phi\colon & (x_0,\1)\to (x_0\oplus \1,\1), \\
& (x_1,\1)\to (x_1\oplus \1,\1),\\
& (\1,x_0)\to (\1,x_1),\\
& (\1,x_1)\to (\1,x_2)
\end{array}$$
where $\1$ is the identity function and the sum $f\oplus g$ of two functions $f,g$ in $F$ is defined as the function from $F$ which on $[0,\frac 12]$ coincides with the shrunk by the factor of 2 copy of $f$, i.e., $\frac12 f(2t)$, and on $[\frac 12,1]$ coincides with the shrunk by the factor of 2 copy of $g$ shifted by $\frac 12$, i.e., $\frac 12 g(2t-1)+\frac12$.

\subsection{Amenable maximal subgroups}

\begin{prob}  Does $F$ contain an elementary amenable maximal subgroup?
\end{prob}

The problem is open, but the following result was recently obtained by the first author.

\begin{theorem}[Golan \cite{Golan16}] \label{t:golan} There is a sequence of finitely generated subgroups $B<K<F$ such that $B$ is elementary amenable and maximal in $K$,  $K$ is normal in $F$ and $F/K$ is infinite cyclic.
\end{theorem}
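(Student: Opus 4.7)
The plan is to realize $K$ as the kernel of a suitable surjection $\phi\colon F\to\Z$ and to pick $B$ as a carefully chosen two-generated elementary amenable subgroup of $K$; the maximality of $B$ in $K$ would then be established using the Stallings $2$-core machinery developed in Section \ref{ss:imp}.

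First I would choose $\phi\colon F\to \Z$. Since $F/[F,F]\cong\Z^2$ is generated by the images of $x_0$ and $x_1$, any such $\phi$ factors through this quotient and is determined by a primitive element of $\Z^2$; geometrically $\phi$ is a $\Z$-linear combination of the logarithms of the slopes at $0$ and at $1$. The cleanest normal subgroup to work with is the kernel $K$ of the map sending $f\mapsto \log_2 f'(0)$ (equivalently $x_0\mapsto 1$, $x_1\mapsto 0$), so $K=\{f\in F:f'(0)=1\}$. This $K$ is normal in $F$ with $F/K\cong\Z$, and it is known to be finitely generated (and isomorphic to the Thompson-like group $F$ itself via the usual rescaling trick at a half-interval, giving an explicit finite generating set).

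Next I would select $B$ inside $K$ to be finitely generated and elementary amenable. A natural candidate is a two-generated copy of $\Z\wr\Z$, which is metabelian and therefore elementary amenable; the paper already uses the embedding $F\wr\Z\hookrightarrow F$ from \cite{GuSa99,WC}, and restricting to an abelian base gives an explicit $B=\langle a,b\rangle$ sitting inside $K$. Here $a$ should generate the $\Z$-action (moving the ``support'' intervals), while $b$ generates the base $\Z$, both chosen so that their derivatives at $0$ are trivial.

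With $B$ and $K$ in hand, the heart of the argument is to prove that $B$ is maximal in $K$. This splits into (i) $B\subsetneq K$ and (ii) for every $g\in K\setminus B$, $\langle B,g\rangle=K$. For (i) I would compute the Stallings $2$-core $\La_B$ of $B$ (as in Figure \ref{f:x1}) and exhibit a specific diagram over the Dunce hat representing an element of $K$ that is not accepted by $\La_B$; Lemma \ref{l:au} then shows this element lies outside $B$. For (ii), given $g\in K\setminus B$, I would argue that $g$ forces non-trivial new foldings when adjoined to $\La_B$, and via a finite case analysis on which ``local patterns'' in $\La_B$ can be merged, conclude that the $2$-core of $\langle B,g\rangle$ collapses to a $2$-core accepting every generator of $K$. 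A useful reduction, analogous to Lemmas \ref{pos}--\ref{no_blo}, is to replace $g$ by a shortest representative of its double coset $BgB$ and to exploit the cyclic quotient $F/K\cong\Z$ together with the modular structure of $[B,K]$.

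The main obstacle will be step (ii): unlike the Jones subgroup case treated in Theorem \ref{main}, where the ambient cyclic quotient is absent and $\arr F$ has a very rigid description via $F_3$, here one must combine the $2$-core bookkeeping with the constraint that all elements considered lie in the slope-at-$0$ subgroup. This forces a delicate combinatorial argument to rule out intermediate subgroups $B\subsetneq M \subsetneq K$, and is where the bulk of the work in \cite{Golan16} is expected to live.
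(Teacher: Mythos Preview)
The paper does not actually prove this result; it is quoted from \cite{Golan16} with the single hint that ``the subgroup $B$ is isomorphic to one of Brin's subgroups of $F$ from \cite{Brin}.'' So there is no proof here to compare against, but your outline has a concrete error that would make the approach fail regardless.

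Your $K=\{f\in F:f'(0)=1\}$ is \emph{not} finitely generated, so it cannot be the $K$ of the statement. Every element of this subgroup is the identity on some interval $[0,a]$ with $a>0$ (slope $1$ at $0$ together with $f(0)=0$ force the first linear piece to be $t\mapsto t$), and this is closed under composition; hence any finitely generated subgroup of your $K$ is the identity on a common $[0,a_0]$, whereas $K$ itself contains $x_0^{\,n}x_1x_0^{-n}$, whose support meets $(0,a_0)$ for all large $n$. This $K$ is precisely the normal subgroup $L$ appearing in Section~5.1, and your parenthetical that it is isomorphic to $F$ ``via the usual rescaling trick'' confuses it with the much smaller copy of $F$ supported on $[\tfrac12,1]$. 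A correct $K$ must therefore be the kernel of a primitive character of $F$ other than the two endpoint--slope characters. The paper's hint also indicates that $B$ is a Brin subgroup---a tower of iterated wreath products, not virtually solvable---rather than a single copy of $\Z\wr\Z$; and your maximality step (``a finite case analysis on which local patterns in the $2$-core can be merged'') is not yet an argument: nothing analogous to the block/coset combinatorics driving Theorem~\ref{main} has been identified.
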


Note that the subgroup $B$ is isomorphic to one of Brin's subgroups of $F$ from \cite{Brin}. Also note that $K$ contains the derived subgroup of $F$, hence it contains a copy of $F$. Thus $K$ is amenable if and only if $F$ is.

\subsection{The closure operation on the lattice of subgroups of $F$}\label{53}

The notion of 2-core of a subgroup of $F$ gives rise to the following natural notion.

\begin{definition} The {\em closure} $\Cl(H)$ of a subgroup $H$ of $F$ is the subgroup of $F$ consisting of all diagrams that are accepted by the 2-core $\La(H)$ of $H$.
\end{definition}

It is clear that all usual conditions of the closure operation are satisfied, that is, $H\le \Cl(H)$ (Lemma \ref{l:au}), $\Cl(\Cl(H))=\Cl(H)$ and if $H_1\le H_2$, then $\Cl(H_1)\le\Cl(H_2)$.  If $H$ is finitely generated, then the 2-automaton $\La(H)$ is finite, and so the membership problem in $\Cl(H)$ is decidable, hence the distortion function of $\Cl(H)$ is recursive. Recall that it is still not known \cite{GuSa99} whether the distortion function of every finitely generated subgroup of $F$ is recursive, and in fact no finitely generated subgroup with super-polynomial distortion function is known.

\begin{prob} Is every closed subgroup of $F$ quasi-isometrically embedded, i.e., its distortion function is linear?
\end{prob}

The closure operation preserves several properties of a subgroup. For example, it is not hard to see (and follows from Theorem \ref{t:golan1} below) that the closure of every non-trivial cyclic subgroup generated by a function that has no fixed points except $0$ and $1$ is cyclic. Moreover, using the description of solvable subgroups from \cite{Bleak} the first author proved

\begin{theorem} [Golan, \cite{Golan16}]\label{t:g5} If $H$ is finitely generated and solvable of class $k$, then $\Cl(H)$ is also finitely generated and solvable of class $k$.
\end{theorem}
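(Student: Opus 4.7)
The plan is to combine the 2-automaton machinery of Section \ref{ss:imp} with Bleak's structure theorem for solvable subgroups of $F$ from \cite{Bleak1}. Bleak characterizes solvability class via a dynamical invariant (\emph{transition chains}): a subgroup of $F$ is solvable of class exactly $k$ if and only if it contains a transition chain of length $k$ but none of length $k+1$. Since $H\le\Cl(H)$, we immediately get that $\Cl(H)$ has derived length at least $k$, so the nontrivial tasks are (i) showing $\Cl(H)$ has derived length at most $k$ and (ii) showing $\Cl(H)$ is finitely generated.

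For (i), I would argue contrapositively: suppose $\Cl(H)$ contained a transition chain $g_1,\dots,g_{k+1}$. Each $g_i$ is represented by a reduced diagram $\Delta_i$ accepted by the finite 2-core $\La(H)$. The key observation is that the support intervals of $g_i$, and the germ of $g_i$ at each of its fixed points, are encoded combinatorially in the way the paths $\topp(\Delta_i)$ and $\bott(\Delta_i)$ walk through $\La(H)$: two accepted diagrams whose top and bottom paths agree on a prefix must agree, as piecewise-linear maps, on the corresponding dyadic interval. Using this encoding I would construct elements $h_1,\dots,h_{k+1}\in H$ (not just in $\Cl(H)$) whose supports and fixed-point germs mimic those of the $g_i$, yielding a transition chain of length $k+1$ inside $H$ itself and contradicting the hypothesis.

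For (ii), once solvability of class $k$ is established, Bleak's classification gives $\Cl(H)$ a rigid description in terms of an iterated extension whose successive quotients are controlled subgroups of $F$. Combined with the finiteness of $\La(H)$ — which bounds the combinatorial complexity of any diagram accepted by it, and hence the possible supports of elements of $\Cl(H)$ — this should force $\Cl(H)$ to embed into a finitely generated solvable subgroup of $F$ of class $k$ that can be written down explicitly from $\La(H)$. A concrete finite generating set can then be extracted by enumerating diagrams accepted by $\La(H)$ of complexity bounded in terms of $|\La(H)|$ and $k$, and closing under the group operation using the solvable structure.

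The main obstacle is step (i): faithfully translating the purely combinatorial condition ``$\Delta$ is accepted by $\La(H)$'' into dynamical information (supports, fixed-point germs, chain-overlap behaviour) precise enough to lift a hypothetical transition chain from $\Cl(H)$ back into $H$. This is the point where the Stallings-style 2-core combinatorics of Section \ref{ss:imp} must be married to the support dynamics underlying Bleak's theorem; everything else in the argument follows more or less formally once this bridge is in place.
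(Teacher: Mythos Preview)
The paper does not actually prove this theorem; it is stated as a result from \cite{Golan16}, with only the remark that the proof ``uses the description of solvable subgroups from \cite{Bleak}''. So there is no detailed argument in the paper to compare against---only that hint, together with the companion result Theorem~\ref{t:golan1} from the same source.

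Your plan correctly identifies Bleak's classification as the main external tool, and the inequality $\mathrm{derived\ length}(\Cl(H))\ge k$ is indeed immediate from $H\le\Cl(H)$. But the step you yourself flag as the main obstacle---translating ``$\Delta$ is accepted by $\La(H)$'' into dynamical data about supports and fixed-point germs---is precisely the content of Theorem~\ref{t:golan1}: $\Cl(H)$ is generated by the components of elements of $H$, equivalently it consists of those $f\in F$ every linear piece of which is a restriction of a linear piece of some element of $H$. With that in hand, step (i) becomes short: every orbital of an element of $\Cl(H)$ is already an orbital of an element of $H$, so a tower (in Bleak's sense) of height $k+1$ in $\Cl(H)$ yields one in $H$. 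What you are proposing is essentially to rederive Theorem~\ref{t:golan1} from the 2-core combinatorics; that is the substantial theorem here, not a lemma one sketches in passing, and your proposal does not contain its proof.

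Your step (ii) is too vague to count as an argument. Finiteness of $\La(H)$ does not by itself bound the number of generators of $\Cl(H)$ (closed subgroups with finite 2-core can be infinitely generated in general settings), and ``enumerating diagrams of bounded complexity and closing under the group operation using the solvable structure'' is a hope, not a proof. The finite-generation claim presumably also runs through the components description of $\Cl(H)$ combined with Bleak's structural results on finitely generated solvable subgroups of $\mathrm{PL}_o(I)$, but you have not supplied that argument.
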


Still the following problem is open.

\begin{prob} \label{prob4} Is it true that if $H$ is finitely generated, then $\Cl(H)$ is finitely generated?
\end{prob}

Note that in Theorem \ref{t:g5}, one cannot replace ``solvable'' by ``elementary amenable''. Indeed,  if $B$ is the group from Theorem \ref{t:golan}, then there is a copy of $B$ in $F$ whose closure contains a subgroup isomorphic to $F$.

\begin{definition} Let $h\in F$. Then {\em components} of $h$ are all elements of $F$ that coincide with $h$ on a closed interval $[a,b]$ with finite binary $a,b$ and are identity outside $[a,b]$ (in that case $h$ necessarily fixes $a$ and $b$).
\end{definition}

For example, if $h= f\oplus g$, then $f\oplus \1$ and $\1\oplus g$ are components of $h$. Clearly components of $h$ pairwise commute and $h$ is a product of its ``minimal'' components, i.e., components with connected support (the set of all non-fixed points of $[0,1]$).

It is easy to prove that if $h$ is in a subgroup $H$ of $F$, then the components of $h$ are in the closure of $H$. The converse was conjectured by Guba and the second author about 20 years ago, and was recently proved by the first author of this paper.

\begin{theorem}[Golan, \cite{Golan16}] \label{t:golan1} The subgroup $\Cl(H)$ coincides with the subgroup of $H$ generated by all components of elements of $H$. Moreover $\Cl(H)$ coincides with the subgroup of $F$ consisting of piece-wise linear functions $f$ (with finitely many pieces) such that every linear piece of $f$ is a restriction of a linear piece of some function from $H$.
\end{theorem}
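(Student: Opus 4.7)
The plan is to prove both claims of the theorem simultaneously via the cyclic chain of inclusions $A\subseteq \Cl(H)\subseteq B\subseteq A$, where $A$ is the subgroup of $F$ generated by all components of elements of $H$ and $B$ is the set of piecewise linear $f\in F$ each of whose linear pieces is a restriction of a linear piece of some function from $H$. The first two inclusions follow fairly directly from the construction of the 2-core $\La(H)$; the last is the hard direction and is the content of \cite{Golan16}.

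For $A\subseteq\Cl(H)$: let $h\in H$ and let $h'$ be a component of $h$ with dyadic support $[a,b]$, so that $h(a)=a$ and $h(b)=b$. The diagram $\Delta_h$ of $h$ splits along the vertical cuts at $a$ and $b$ into three subdiagrams $\Delta_L,\Delta_M,\Delta_R$ acting on $[0,a]$, $[a,b]$, $[b,1]$ respectively. The diagram $\Delta_{h'}$ is obtained from $\Delta_h$ by replacing $\Delta_L$ and $\Delta_R$ with trivial arcs while keeping $\Delta_M$. If $\psi\colon\Delta_h\to\La(H)$ is a morphism witnessing acceptance, its restriction to $\Delta_M$ together with the canonical trivial morphisms on the two arcs assembles into a morphism $\Delta_{h'}\to\La(H)$. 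Hence $h'\in\Cl(H)$, so $A\subseteq\Cl(H)$.

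For $\Cl(H)\subseteq B$: let $f\in\Cl(H)$ be witnessed by $\psi\colon\Delta_f\to\La(H)$. Each linear piece of $f$ corresponds to a matched pair of leaves of the top and bottom trees of $\Delta_f$, hence to a vertical sub-path from $\topp(\Delta_f)$ to $\bott(\Delta_f)$ through the cells of $\Delta_f$. Under $\psi$ this sub-path projects to a path in $\La(H)$ from the distinguished edge to itself. Because $\La(H)$ is produced from the atomic diagrams of a generating set $h_1,\dots,h_k$ of $H$ by iterated cell-foldings, and each folding step only identifies cells sharing a top or bottom edge, any such path in $\La(H)$ can be unfolded back to a vertical sub-path in the diagram of some finite product of generators, i.e., in $\Delta_h$ for some $h\in H$. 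This lifted sub-path singles out a single linear piece of $h$ whose dyadic domain contains that of the corresponding piece of $f$ and whose affine map restricts to the map of $f$ on the smaller interval. Thus every linear piece of $f$ is a restriction of a linear piece of some element of $H$, so $f\in B$.

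The main obstacle is the inclusion $B\subseteq A$, the Guba--Sapir conjecture resolved by Golan in \cite{Golan16}. The natural approach is induction on the number $n$ of breakpoints of $f\in B$. For $n=0$, $f$ is the identity. For $n\geq 1$, one seeks $\varphi\in A$ whose action agrees with $f$ on the rightmost non-trivial linear piece, say on $[a_{n-1},1]$, and which is the identity on $[0,a_{n-1}]$; then $f\varphi^{-1}$ has fewer breakpoints and the induction proceeds. The natural candidate for $\varphi$ is the component, with support $[a_{n-1},1]$, of some $h\in H$ realizing this rightmost piece. The difficulty is that although $f\in B$ yields such an $h$, in general $h$ need not fix the point $a_{n-1}$: its linear piece may extend strictly to the left of $a_{n-1}$, so this particular component simply does not exist. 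The resolution, which is the heart of the argument, is to combine components of several elements of $H$ using the pieces of $f$ in a neighborhood of $a_{n-1}$: by multiplying carefully chosen components supported on progressively smaller dyadic subintervals, one aligns the relevant fixed points with the breakpoints of $f$ and produces the required $\varphi\in A$. The bookkeeping of these nested adjustments is the technical content requiring new ideas in \cite{Golan16}.
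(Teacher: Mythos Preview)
The paper does not contain a proof of this theorem. It is stated with attribution to \cite{Golan16}, a separate preprint of the first author; the surrounding text only remarks that the inclusion of components into $\Cl(H)$ is ``easy to prove'' and that the converse was a twenty-year-old conjecture of Guba and Sapir settled in \cite{Golan16}. So there is no proof here against which to compare your attempt.

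On the merits of your sketch itself, two of the three inclusions have genuine gaps.

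For $A\subseteq\Cl(H)$: your idea is right, but the phrase ``together with the canonical trivial morphisms on the two arcs assembles into a morphism'' hides the only nontrivial point. Replacing $\Delta_L$ by a single arc requires that the top edge and the bottom edge of $\Delta_L$ have the \emph{same} image in $\La(H)$, otherwise there is nowhere consistent to send the arc. This does hold, but it needs the observation that since $\topp(\Delta_h)$ and $\bott(\Delta_h)$ are identified in $\La(H)$, the first positive cell of $\Delta_h$ and the inverse of the last negative cell share their top path and hence get folded; iterating this down the tree identifies the edges you need. You should say this.

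For $\Cl(H)\subseteq B$: the ``unfolding'' step is not an argument. A path in $\La(H)$ may traverse cells that originated in different generators $\Delta_{h_i}$, and foldings destroy exactly the information you would need to lift such a path into a single $\Delta_h$ with $h\in H$. Nothing you wrote explains why the affine data of a linear piece of $f$ can be matched by a piece of one element of $H$ rather than a patchwork. This inclusion also requires real work, not just the last one.

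For $B\subseteq A$: you correctly locate the difficulty and correctly describe the obstruction (the $h$ realizing a given piece need not fix the breakpoint, so the desired component may not exist). Your proposed resolution is, as you acknowledge, just a pointer to \cite{Golan16}.
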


This theorem immediately implies the following

\begin{cor} Let $X$ be any subset of the unit interval $[0,1]$. Then the stabilizer of $X$ in $F$ is a closed  subgroup of $F$, that is $\Cl(H)=H$. Thus all Savchuk's subgroups $H_U$ and Jones' subgroup $\arr F$ are closed.
\end{cor}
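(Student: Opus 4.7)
The plan is to derive the corollary as a direct consequence of Theorem~\ref{t:golan1}, which characterizes $\Cl(H)$ as the subgroup of $F$ consisting of piecewise linear functions each of whose linear pieces is the restriction of a linear piece of some element of $H$. Given any $X \subseteq [0,1]$ and $H = \mathrm{Stab}_F(X)$, the inclusion $H \subseteq \Cl(H)$ is automatic from general properties of the closure operation, so the only content is to establish the reverse inclusion $\Cl(H) \subseteq H$.

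To prove $\Cl(H) \subseteq H$, I would take an arbitrary $f \in \Cl(H)$ and an arbitrary $x \in X$, and show that $f(x) = x$. Since $f$ is piecewise linear with finitely many pieces, $x$ lies in some linear piece $[a_{i-1}, a_i]$ of $f$. By Theorem~\ref{t:golan1}, there is an element $h \in H$ whose linear piece through $[a_{i-1}, a_i]$ agrees with $f$ on that interval. Therefore $f(x) = h(x)$, and since $h$ fixes $X$ pointwise we conclude $f(x) = x$. As $x \in X$ was arbitrary, $f$ fixes $X$ pointwise, so $f \in H$, yielding $\Cl(H) = H$. Note that the argument goes through uniformly whether $x$ is interior to a linear piece or is itself an endpoint of two adjacent pieces, since the pieces form a closed cover of $[0,1]$.

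The two advertised consequences are now immediate: Savchuk's subgroups $H_U$ are by definition stabilizers of the finite sets $U$, and Jones' subgroup $\arr F$ is (via its description in \cite{Jones,GS}) the stabilizer of an appropriate subset of $[0,1]$, so the first part applies in both cases. The main content of the argument is carried by Theorem~\ref{t:golan1}; no serious further obstacle arises, with the single caveat that the realization of $\arr F$ as a set-stabilizer is not manifest from the generating set $\{x_0x_1, x_1x_2, x_2x_3\}$ and must be imported from the referenced descriptions in order to license the final ``Thus''.
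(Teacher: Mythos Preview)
Your proposal is correct and is exactly the natural way to unpack the paper's one-line assertion that the corollary ``immediately'' follows from Theorem~\ref{t:golan1}; there is no alternative proof in the paper to compare against. Your caveat about $\arr F$ is well taken: its realization as the (setwise) stabilizer of a subset of dyadic rationals is established in \cite{GS}, and once that is imported the same piecewise argument (noting that $\Cl(H)$ is a group, so both $f$ and $f^{-1}$ map $X$ into $X$) goes through.
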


\begin{prob}\label{p:max} Is it true that every maximal subgroup of $F$ of infinite index is closed?
\end{prob}

Every closed subgroup $H$ of $F$ is a diagram group. Indeed, it is a diagram group of the 2-automaton $\La(H)$ viewed as a directed 2-complex. Moreover closed subgroups can be characterized ``abstractly'' as precisely the diagram groups $\DG(\kk,a)$ where $a$ is an edge and $\kk$ is a directed 2-complex where every positive cell has the form $t\to lr$ ($t,l,r$ are edges) and no foldings can be performed (which means that for every $t$ there is only one cell of the form $t\to lr$, and for every pair $(l,r)$ there is only one cell of the form $t\to lr$). Indeed, if $\kk$ is such a directed 2-complex, then mapping every edge to $x$ and every cell $t\to lr$ to $x\to x^2$ gives a morphism from $\kk$ to the Dunce hat (Figure \ref{f:1}) and it is not difficult to prove that the corresponding homomorphism from $\DG(\kk,a)$ to $F$ is injective and the image is closed (that follows from Theorem \ref{t:golan1}).

\begin{prob} \label{p:9} Suppose that $\Cl(H)=F$ and $H$ is not inside a proper subgroup of finite index of $F$. Is it true that $H=F$?
\end{prob}

The positive answer to Problem \ref{p:9} would give an easy algorithm to decide whether a finite set of elements of $F$ generates the whole $F$, i.e., solves the generation problem for $F$.
Indeed first we decide whether these elements are not inside any finite index subgroup of $F$. That can be done by looking at their images in $F/[F,F]$. Then construct the 2-core $\La(H)$ and check if it coincides with the 2-core $\La(F)$.

Note that in \cite{Golan16'}, the first author proved that the generation problem for $F$ is in fact decidable (the notion of 2-core is also used in the algorithm from \cite{Golan16'}).

Note also that a positive answer to Problem \ref{p:9} would imply a positive answer to Problem \ref{p:max}. Indeed, if $H\le F$ is a non-closed maximal subgroup of infinite index in $F$ then $\Cl(H)=F$ and $H$ is not contained in any proper finite index subgroup of $F$.

\end{document}